\documentclass{article}

\usepackage[utf8]{inputenc}
\usepackage{amssymb,amsmath,amsthm,amsfonts}
\usepackage{calc}
\usepackage{soul}
\usepackage{xcolor}
\usepackage[rightcaption]{sidecap}
\usepackage{subfigure}
\usepackage{gensymb}
\usepackage{url}
\usepackage{graphicx}
\usepackage{parskip}
\usepackage{fancyhdr}
\usepackage{vmargin}
\usepackage{enumerate} 
\usepackage{wrapfig}
\usepackage{mathrsfs}
\usepackage{float}
\usepackage{authblk}

\newtheorem{teo}{Theorem}[section]
\newtheorem{cor}[teo]{Corollary}
\newtheorem{defi}[teo]{Definition}
\newtheorem*{defi*}{Definition}
\newtheorem{obs}[teo]{Remark}

\newtheorem{prop}[teo]{Proposition}
\newtheorem{lem}[teo]{Lemma}

\title{From Wheel to Jahangir graphs: spectra and energy under structured edge deletion}
\author[1]{Diego Bravo}
\author[2]{Florencia Cubría}
\author[2]{Annabella Zapattini\footnote{The authors were partially supported by PEDECIBA and ANII.}}

\affil[1]{\small\textit{Research \& Development, Cresteo\\
200 E Randolph St.\ Ste 5100, Chicago, IL 60601, USA.}\\
\texttt{diego.bravo@cresteo.com}
}

\affil[2]{\small\textit{Instituto de Matemática y Estadística ``Rafael Laguardia''\\
Facultad de Ingeniería, Universidad de la República\\
Julio Herrera y Reissig 565, Montevideo, Uruguay. 11300}\\
\texttt{fcubria@fing.edu.uy, azapattini@fing.edu.uy}
}

\date{}

\begin{document}
\graphicspath{{./imagenes/}}
\maketitle

\begin{abstract}
The energy of a graph, introduced by Gutman \cite{Gut}, is defined as the sum of the absolute values of the eigenvalues of its adjacency matrix. In this work we study how the energy of the Wheel graph $W_n$ changes when a specific set of spokes is removed. The resulting graphs are precisely the Jahangir graphs $J_{m,n}$, where $m$ divides $n$, and which may also be viewed as the Cycle graph $C_n$ together with an additional vertex adjacent to certain vertices of $C_n$ in a cyclic pattern.

We determine the spectra of $J_{2,n}$, $J_{4,n}$, and $J_{6,n}$ by exploiting a factorization of their characteristic polynomials arising from equitable partitions. This yields explicit closed formulas for their energies. Moreover, {under certain conditions,} we establish the following monotonicity chain:
\[
\mathcal{E}(C_n) \;\leq\; \mathcal{E}(J_{6,n}) \;\leq\; \mathcal{E}(J_{4,n}) 
\;\leq\; \mathcal{E}(J_{2,n}) \;\leq\; \mathcal{E}(W_n).
\]

\noindent\textbf{Keywords:} Graph energy change; Jahangir graphs; wheel-related graphs; energy of graphs; spectrum of graphs.

\noindent\textbf{AMS Subject Classification:} 05C50; 15A18.
\end{abstract}

\section{Introduction}

Throughout this work we consider simple graphs without loops or multiple edges. For a graph $G$ of order $n$, let $A(G)$ denote its adjacency matrix. The characteristic polynomial of $G$ is
\[
\phi(G,x)=\det(xI - A(G)) = x^n + \sum_{i=1}^n a_i x^{n-i}.
\]
The eigenvalues of $A(G)$ will be referred to as the eigenvalues of $G$, and since $A(G)$ is real and symmetric, all of them are real. If $\lambda_1,\dots,\lambda_n$ denote the eigenvalues of $G$, the \emph{energy} of $G$ is defined as
\[
\mathcal{E}(G) = \sum_{i=1}^n |\lambda_i|.
\]
This notion, originating in chemical graph theory through the Hückel Molecular Orbital method, was introduced by Gutman \cite{Gut}. A comprehensive survey of developments up to 2012 is given in \cite{Li}. Section~4.7 of that monograph discusses the area known as \emph{graph energy change}; see, for instance, \cite{Akb, Day, Day2, Gut3, Gut4, Shan, Shan2, Shan3, So}.

In the concluding remarks of \cite{Gut2}, Gutman notes the difficulty of characterizing the graphs $G$ and edges $e$ for which $\mathcal{E}(G-e) \leq \mathcal{E}(G)$. Subsequent work has shown that deleting an edge may increase, decrease, or leave unchanged the energy of a connected graph. In particular, Day \textit{et al.} \cite{Day} provide examples illustrating all three behaviors, and prove that if $e$ is a bridge then $\mathcal{E}(G-e) \leq \mathcal{E}(G)$; hence $\mathcal{E}(T-e) \leq \mathcal{E}(T)$ for any tree $T$. Later, So and Wong \cite{So} proved that for $n \neq 4$, deleting any edge of the cycle $C_n$ decreases its energy. Observe that removing an edge from $C_n$ yields the path graph $P_n$, which allows us to conclude the following inequality $\mathcal{E}(P_n) \leq \mathcal{E}(C_n)$.

Our objective is to analyze the change in energy produced when deleting selected spokes of the Wheel graph $W_n$. We consider several systematic deletion patterns. Removing alternating spokes yields one family, removing blocks of two successive spokes (separated by one remaining spoke) yields another, and similarly for blocks of three consecutive spokes, and so on. These families are precisely the Jahangir graphs $J_{m,n}$, which arise when the hub of $W_n$ is connected to a periodic subset of the vertices of $C_n$.

A key observation is that $J_{m,n}$ inherits part of the spectrum of $C_n$. For $m$ even, we use equitable partitions to obtain a factorization of $\phi(J_{m,n},x)$ that isolates the inherited eigenvalues and allows us to compute the remaining ones explicitly. This enables us to determine the full spectra of $J_{2,n}$, $J_{4,n}$, and $J_{6,n}$, and then compute and compare their energies with those of $C_n$ and $W_n$.

\section{Preliminaries}

In this section we introduce the family of Jahangir graphs and recall the spectra and energies of the classical graphs $C_n$, $P_n$, and $W_n$.\\
Since the adjacency matrix of the cycle graph $C_n$ is circulant, its spectrum can be described explicitly using the general theory of circulant matrices. We recall the relevant result below.

\begin{prop}[\cite{Gra}]\label{prop:matrizCirculante}
Let $A$ be a circulant matrix. In particular, this applies to the adjacency matrix $A(C_n)$ of the cycle graph. Then the set of eigenvectors of $A$ is
\[
\mathcal{Y}=\{y_0, y_1, \dots, y_{n-1}\},
\qquad
y_k = (1, \rho_k, \rho_k^2, \dots, \rho_k^{\,n-1}),
\]
where $\rho_k=e^{\frac{2\pi k}{n}i}$, for $k=0,\dots,n-1$.  
Moreover, each eigenvector $y_k$ corresponds to the eigenvalue
\[
\lambda_k=\sum_{j=0}^{n-1}c_j e^{\frac{2\pi jk}{n}i},
\]
where $(c_0, c_1, \dots, c_{n-1})$ is the first row of $A$.
\end{prop}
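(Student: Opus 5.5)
The plan is to check directly that each vector $y_k$ satisfies $Ay_k=\lambda_k y_k$, and then to show that the $y_k$ form a basis, so that no other eigenvectors are needed. Write the circulant matrix as $A=(a_{rs})$ with $a_{rs}=c_{(s-r)\bmod n}$, indices running over $0,\dots,n-1$. This indexing says that row $r$ is the first row $(c_0,\dots,c_{n-1})$ shifted cyclically $r$ places to the right.

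\textbf{Step 1 (eigen-equation).} Fix $k$ and abbreviate $\rho=\rho_k$. The key fact is $\rho^n=1$, so powers of $\rho$ may be read modulo $n$. The $r$-th entry of $Ay_k$ is
\[
(Ay_k)_r=\sum_{s=0}^{n-1}c_{(s-r)\bmod n}\,\rho^{s}.
\]
Substitute $j=(s-r)\bmod n$. As $s$ runs over $\{0,\dots,n-1\}$, so does $j$, and $\rho^{s}=\rho^{r}\rho^{j}$ because $\rho^n=1$. Therefore
\[
(Ay_k)_r=\rho^r\sum_{j=0}^{n-1}c_j\rho^{j}=\lambda_k\,\rho^r=\lambda_k\,(y_k)_r ,
\]
which is exactly $Ay_k=\lambda_k y_k$, with $\lambda_k=\sum_j c_j e^{2\pi i jk/n}$ as claimed.

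\textbf{Step 2 (completeness).} We need the set $\mathcal{Y}$ to account for all eigenvectors, i.e. to be a basis of $\mathbb{C}^n$. The matrix whose rows are $y_0,\dots,y_{n-1}$ is the Vandermonde (DFT) matrix built on the distinct $n$-th roots of unity $\rho_0,\dots,\rho_{n-1}$, so it is nonsingular. Alternatively, use orthogonality:
\[
\langle y_k,y_l\rangle=\sum_{m=0}^{n-1}e^{2\pi i m(k-l)/n},
\]
a geometric sum equal to $n\delta_{kl}$. Either way $A$ is diagonalized by these vectors, and every eigenvector of $A$ lies in the span of those $y_k$ sharing its eigenvalue.

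\textbf{Step 3 (the cycle).} For $A(C_n)$ the first row has $c_1=c_{n-1}=1$ and all other $c_j=0$. Hence
\[
\lambda_k=e^{2\pi i k/n}+e^{2\pi i k(n-1)/n}=2\cos\frac{2\pi k}{n},
\]
which is the spectrum of $C_n$ used later in the paper.

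The only delicate point is the index bookkeeping in Step 1. The convention (shift right versus left) decides whether the formula comes out with $e^{+2\pi i jk/n}$ or $e^{-2\pi i jk/n}$. Under the opposite convention one gets $\lambda_{n-k}$ instead of $\lambda_k$, so the spectrum is the same set, and for symmetric circulants such as $A(C_n)$ the two agree exactly. Completeness also needs care in how it is read: the statement should be understood as saying the $y_k$ form an eigenbasis, since any nonzero combination of $y_k$ sharing an eigenvalue is also an eigenvector.
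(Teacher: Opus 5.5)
Your proof is correct and follows the standard argument for this fact. The paper gives no proof of its own, only a citation. Your argument is the usual one: verify $Ay_k=\lambda_k y_k$ directly using $\rho_k^n=1$, then show the $y_k$ are linearly independent via the Vandermonde/DFT orthogonality. This also justifies reading ``the set of eigenvectors'' as an eigenbasis.

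One small correction to your closing remark concerns which convention yields $\lambda_{n-k}$. It is the one taking $(c_0,\dots,c_{n-1})$ as the first \emph{column}, i.e.\ $a_{rs}=c_{(r-s)\bmod n}$, which is the transpose of your matrix. By contrast, literally shifting each row to the left gives the anti-circulant $a_{rs}=c_{(r+s)\bmod n}$. For that matrix $Ay_k=\lambda_k y_{n-k}$, so in general the $y_k$ are not eigenvectors at all.
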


Since one of our main objectives is to compare the energy of $W_n$ with that of the related Jahangir graphs, we recall the definition of the Wheel graph.

\begin{defi}
For $n \ge 3$, the \textbf{Wheel graph} $W_n$ is obtained by adding a new vertex, called the \textbf{center}, to the cycle graph $C_n$, and joining it to every vertex of $C_n$.
\end{defi}

The next recurrence is an immediate consequence of Theorem 2.2.1 in \cite{Cve}.

\begin{prop}\label{prop:recurrencia}
For all $n \ge 2$,
\[
\phi(P_n,x) = x\,\phi(P_{n-1},x) - \phi(P_{n-2},x),
\qquad
\phi(P_0,x) = 1.
\]
\end{prop}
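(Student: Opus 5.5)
The plan is to expand the determinant $\phi(P_n,x)=\det(xI-A(P_n))$ along its last row, which is the standard route to Theorem 2.2.1 of \cite{Cve} specialised to a pendant vertex. Label the vertices of $P_n$ as $1,2,\dots,n$ along the path, so that $A(P_n)$ is tridiagonal with zeros on the diagonal and ones on the sub- and superdiagonals. Then $M_n:=xI-A(P_n)$ is tridiagonal, with $x$ on the diagonal and $-1$ on the off-diagonals.

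First, expand $\det M_n$ along its last row. This row has only two nonzero entries: $x$ in position $(n,n)$ and $-1$ in position $(n,n-1)$.

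1. The $(n,n)$ cofactor is $\det M_{n-1}=\phi(P_{n-1},x)$, because deleting the last row and column of $M_n$ leaves the matrix for the path on vertices $1,\dots,n-1$.
2. For the $(n,n-1)$ entry, delete row $n$ and column $n-1$. The remaining matrix has last column equal to $(0,\dots,0,-1)^T$, the only surviving entry being the $(n-1,n)$ entry of $M_n$. Expanding along that column gives $(-1)\cdot\det M_{n-2}$ with sign $+$, since the entry sits on the diagonal of the $(n-1)\times(n-1)$ minor.
3. Collecting signs, the $(n,n-1)$ term is $(-1)^{n+(n-1)}\cdot(-1)\cdot(-1)\det M_{n-2}=-\det M_{n-2}$.

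Combining the two terms gives $\phi(P_n,x)=x\,\phi(P_{n-1},x)-\phi(P_{n-2},x)$.

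Next, the base cases. With the convention $\phi(P_0,x)=1$ (the empty determinant) and $\phi(P_1,x)=x$, the case $n=2$ gives $x^2-1$. This agrees with the direct computation $\det\begin{pmatrix}x&-1\\-1&x\end{pmatrix}$, so the expansion is consistent for $n=2$ with $M_0$ interpreted as empty.

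Alternatively, one can simply cite Theorem 2.2.1 of \cite{Cve}: for an edge $uv$, $\phi(G)=\phi(G-uv)-\phi(G-u-v)-2\sum_{C}\phi(G-V(C))$, where the sum runs over cycles $C$ containing $uv$. Apply it to the last edge of $P_n$. This gives $\phi(P_n)=x\phi(P_{n-1})-\phi(P_{n-2})$, since $P_n$ minus that edge is $P_{n-1}\cup K_1$ and there are no cycles.

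The only step needing care is the sign bookkeeping in the cofactor expansion; everything else is routine.
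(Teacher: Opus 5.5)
Your proof is correct. Your alternative argument via the edge version of Theorem 2.2.1 of \cite{Cve} is exactly the paper's route, which simply cites that theorem. Your main argument, the cofactor expansion along the last row, is a self-contained proof of the same pendant-vertex special case (the same kind of last-row expansion the paper invokes for Lemma~\ref{lem:Mn}); its sign count $(-1)^{2n-1}\cdot(-1)\cdot(-1)=-1$ for the $(n,n-1)$ term and the $n=2$ check with the empty determinant are both right.
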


\begin{prop}\label{prop:espectros}
We now recall the spectra of the fundamental graphs $C_n$, $P_n$, and $W_n$.\\

\begin{enumerate}
    \item The spectrum of the cycle graph is
    \[
    Sp(C_n) = \bigl[\, 2\cos(2\pi k/n): k=0,\dots,n-1 \,\bigr].
    \]

    \item The spectrum of the path graph is
    \[
    Sp(P_n) = \bigl[\, 2\cos(\pi k/(n+1)): k=1,\dots,n \,\bigr].
    \]

    \item The spectrum of the wheel graph is
    \[
    Sp(W_n)
    =
    \bigl(Sp(C_n)-[2]\bigr) \;\sqcup\; \bigl[\,1+\sqrt{n+1},\; 1-\sqrt{n+1}\,\bigr],
    \]
    where $-$ and $\sqcup$ denote multiset difference and multiset union, respectively.
\end{enumerate}
\end{prop}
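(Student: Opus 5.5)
The plan is to treat the three parts of Proposition~\ref{prop:espectros} separately, using Proposition~\ref{prop:matrizCirculante} for the cycle, the recurrence of Proposition~\ref{prop:recurrencia} for the path, and an eigenvector decomposition for the wheel.

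\textbf{Cycle.} The first row of $A(C_n)$ is $(0,1,0,\dots,0,1)$, so $c_1=c_{n-1}=1$ and all other $c_j$ vanish. Proposition~\ref{prop:matrizCirculante} then gives
\[
\lambda_k=e^{2\pi k i/n}+e^{2\pi k(n-1)i/n}=e^{2\pi k i/n}+e^{-2\pi k i/n}=2\cos(2\pi k/n),
\]
for $k=0,\dots,n-1$. The vectors $y_k$ are the columns of a Vandermonde (Fourier) matrix with distinct nodes, so they are linearly independent. Hence these $n$ values, counted with multiplicity, form the whole spectrum.

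\textbf{Path.} Substitute $x=2\cos\theta$ with $\theta\in(0,\pi)$. By induction on $n$, using Proposition~\ref{prop:recurrencia}, we get
\[
\phi(P_n,2\cos\theta)=\frac{\sin((n+1)\theta)}{\sin\theta}.
\]
The base cases are $\phi(P_0)=1$ and $\phi(P_1)=x=2\cos\theta=\sin 2\theta/\sin\theta$. The inductive step is the identity $\sin((n+1)\theta)=2\cos\theta\,\sin(n\theta)-\sin((n-1)\theta)$. This Chebyshev form vanishes at $\theta=\pi k/(n+1)$ for $k=1,\dots,n$. These give $n$ distinct values $2\cos(\pi k/(n+1))$, and $\phi(P_n,x)$ is monic of degree $n$, so they are exactly its roots.

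\textbf{Wheel.} Order the vertices with the center last, so that
\[
A(W_n)=\begin{pmatrix} A(C_n) & \mathbf{1}\\ \mathbf{1}^T & 0\end{pmatrix}.
\]
For $k\neq 0$ the eigenvector $y_k$ satisfies $\mathbf{1}^T y_k=\sum_j\rho_k^j=0$, since $\rho_k\neq1$ is an $n$-th root of unity. Therefore $(y_k,0)$ is an eigenvector of $A(W_n)$ with eigenvalue $2\cos(2\pi k/n)$. This accounts for $Sp(C_n)-[2]$, with the correct multiplicities because these $n-1$ vectors are independent.

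The remaining two eigenvectors lie in the orthogonal complement, which is spanned by $(\mathbf{1},0)$ and $e_{n+1}$. This is just the equitable partition {rim, center}, whose quotient matrix is
\[
\begin{pmatrix}2&1\\ n&0\end{pmatrix}.
\]
Its characteristic polynomial is $x^2-2x-n$, with roots $1\pm\sqrt{n+1}$. Explicitly, $(\mathbf{1},t)$ is an eigenvector for eigenvalue $\mu$ when $2+t=\mu$ and $n=\mu t$, and eliminating $t$ gives the same quadratic.

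\textbf{Main obstacle.} The only delicate point is the multiplicity bookkeeping: showing that the listed vectors span the full space, so that no eigenvalue is missed or double-counted. For the cycle and wheel, independence of the Fourier vectors handles this. For the path, distinctness of the $n$ roots handles it.
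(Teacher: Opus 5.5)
Your proof is correct, but it takes a different route from the paper. The paper cites the standard monograph for the cycle and path spectra. For the wheel it applies the join formula for regular graphs to $W_n=K_1\vee C_n$, which gives the polynomial identity $\phi(W_n,x)=\phi(C_n,x)(x^2-2x-n)/(x-2)$ in one line.

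You make all three parts self-contained:
\begin{enumerate}
\item the cycle from the circulant eigenbasis of Proposition~\ref{prop:matrizCirculante};
\item the path via the Chebyshev form $\sin((n+1)\theta)/\sin\theta$, derived from the recurrence of Proposition~\ref{prop:recurrencia};
\item the wheel via an explicit eigenbasis: you extend $y_1,\dots,y_{n-1}$ by a zero and diagonalize on the complement $\mathrm{span}\{(\mathbf 1,0),e_{n+1}\}$.
\end{enumerate}

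The paper's route is shorter and yields the characteristic-polynomial factorization directly, at the cost of relying on an external theorem. Yours produces explicit eigenvectors. It is also exactly the $m=1$ instance of the machinery the paper develops later: Lemma~\ref{lem:sumatoria}, Proposition~\ref{prop:common-no-multiple}, and the equitable-partition quotient of Proposition~\ref{prop:equitable}. So it treats $W_n=J_{1,n}$ uniformly with the Jahangir graphs.

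In substance the two proofs are close, since the join formula for regular graphs is itself usually proved by the same orthogonality-to-$\mathbf 1$ argument you use.
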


\begin{proof}
\leavevmode\\[1pt]
\begin{enumerate}
    \item See Example 1.1.4 in \cite{Cve}.

    \item See Section 2.6, item 7, in \cite{Cve}.

    \item Since $W_n = K_1 \vee C_n$ is the join of $K_1$ and $C_n$, Theorem 2.1.8 of \cite{Cve} yields
    \[
    \phi(W_n,x)
    =
    \frac{\phi(K_1,x)\,\phi(C_n,x)}{x(x-2)} \,\bigl( x(x-2) - n \bigr)
    =
    \frac{\phi(C_n,x)}{x-2}\,(x^2 - 2x - n).
    \]
    The spectrum of $W_n$ consists of all eigenvalues of $C_n$ except $2$, together with the roots of $x^2 - 2x - n$, which are $1\pm\sqrt{n+1}$.
\end{enumerate}
\end{proof}

Throughout this work the notation 
\[
\lambda_k^{(n)} = 2\cos\!\left(\frac{2\pi k}{n}\right), \qquad k = 0,\dots,n-1,
\]
denotes the eigenvalues of the cycle $C_n$.

The corresponding energies are also well known, and their closed forms naturally split into three cases, since every positive integer $n$ is either odd, a multiple of $4$, or satisfies that $n-2$ is a multiple of $4$.

\begin{prop}[\cite{Li}]
The energy of the cycle is
\[
\mathcal{E}(C_n) =
\begin{cases}
\displaystyle \frac{4\cos(\pi/n)}{\sin(\pi/n)} & \text{if } n\equiv 0 \pmod 4, \\[6pt]
\displaystyle \frac{4}{\sin(\pi/n)} & \text{if } n\equiv 2 \pmod 4, \\[6pt]
\displaystyle \frac{2}{\sin(\pi/(2n))} & \text{if } n\equiv 1 \pmod 2.
\end{cases}
\]

\end{prop}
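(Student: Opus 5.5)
The plan is to evaluate $\mathcal{E}(C_n)=\sum_{k=0}^{n-1}|2\cos(2\pi k/n)|$ directly from item~1 of Proposition~\ref{prop:espectros}. We split into three cases according to $n \bmod 4$, because the sign pattern of $\cos(2\pi k/n)$ and the presence of the eigenvalue $0$ depend on this. In every case we use the symmetry $k\mapsto n-k$ and $k\mapsto k+n/2$ (when $n$ is even) to reduce the sum to a finite geometric-type sum of cosines over the indices where the cosine is positive. We then evaluate that sum with the standard identity
\[
\sum_{k=a}^{b}\cos(k\theta)=\frac{\sin\bigl((b+\tfrac12)\theta\bigr)-\sin\bigl((a-\tfrac12)\theta\bigr)}{2\sin(\theta/2)}.
\]

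\textbf{Even $n$.} The spectrum is symmetric under negation, since $k\mapsto k+n/2$ changes the sign of each eigenvalue. Hence $\mathcal{E}(C_n)=2\sum_{\lambda>0}\lambda$. The positive eigenvalues are those with $|k|<n/4$ (indices taken mod $n$), so
\[
\sum_{\lambda>0}\lambda=2\sum_{k=-\lfloor (n-1)/4\rfloor}^{\lfloor (n-1)/4\rfloor}\cos(2\pi k/n).
\]
Set $r=\lfloor (n-1)/4\rfloor$. The identity above gives
\[
\sum_{k=-r}^{r}\cos(2\pi k/n)=\frac{\sin\bigl((2r+1)\pi/n\bigr)}{\sin(\pi/n)}.
\]
If $n=4s$, then $r=s-1$ and $\sin((2s-1)\pi/4s)=\sin(\pi/2-\pi/n)=\cos(\pi/n)$. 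This yields $\mathcal{E}=4\cos(\pi/n)/\sin(\pi/n)$. In this case the two eigenvalues $0$ at $k=\pm s$ contribute nothing, which is consistent. If $n=4s+2$, then $r=s$ and $\sin((2s+1)\pi/(4s+2))=1$. This yields $\mathcal{E}=4/\sin(\pi/n)$.

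\textbf{Odd $n$.} There is no negation symmetry. Instead we use that the trace is zero, $\sum_k\lambda_k=0$, so $\mathcal{E}(C_n)=2\sum_{\lambda>0}\lambda$ still holds. The positive eigenvalues again correspond to $|k|\le r$ with $r=\lfloor (n-1)/4\rfloor$, and the same sum gives $\mathcal{E}=4\sin((2r+1)\pi/n)/\sin(\pi/n)$. We now split into $n=4s+1$ (where $2r+1=2s+1$) and $n=4s+3$ (where $2r+1=2s+1$ as well):
\[
\sin\!\Bigl(\tfrac{(2s+1)\pi}{n}\Bigr)=\cos\!\Bigl(\tfrac{\pi}{2}-\tfrac{(2s+1)\pi}{n}\Bigr)=\cos\!\Bigl(\tfrac{\pm\pi}{2n}\Bigr),
\]
because $\pi/2-(2s+1)\pi/n=(n-4s-2)\pi/(2n)=\mp\pi/(2n)$ in the two subcases. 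Then $\sin(\pi/n)=2\sin(\pi/2n)\cos(\pi/2n)$ gives $\mathcal{E}=2/\sin(\pi/(2n))$ in both subcases.

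The only delicate point is the correct count of positive eigenvalues near $k=n/4$ in each residue class, that is, deciding whether the boundary index gives a zero, positive, or negative cosine. Once that bookkeeping is right, the rest is the routine trigonometric simplification above.
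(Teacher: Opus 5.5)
Your proposal is correct. The reduction $\mathcal{E}(C_n)=2\sum_{\lambda>0}\lambda$ (via $k\mapsto k+n/2$ for even $n$ and the zero trace for odd $n$), the count of positive eigenvalues as the indices $|k|\le r=\lfloor (n-1)/4\rfloor$, the telescoping formula $\mathcal{E}(C_n)=4\sin\bigl((2r+1)\pi/n\bigr)/\sin(\pi/n)$, and the three residue-class simplifications all check out, including the boundary bookkeeping at $k=\pm n/4$. The paper gives no proof of its own and simply quotes the result from \cite{Li}, so there is nothing to compare; your direct summation over the cycle spectrum is the standard derivation.
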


\begin{obs}
Using the spectrum in Proposition~\ref{prop:espectros}, the energy of the wheel graph satisfies
\[
\mathcal{E}(W_n)
=
\mathcal{E}(C_n)
+
\bigl|1+\sqrt{n+1}\bigr|
+
\bigl|1-\sqrt{n+1}\bigr|
-
|2|.
\]
\end{obs}

The previous Remark directly yields the following expression.

\begin{prop}\label{prop:energiaCicloRueda}

The energy of the wheel graph is
\[
\mathcal{E}(W_n) =
\begin{cases}
\displaystyle \frac{4\cos(\pi/n)}{\sin(\pi/n)} + 2(\sqrt{n+1}-1), & n\equiv 0 \pmod 4, \\[6pt]
\displaystyle \frac{4}{\sin(\pi/n)} + 2(\sqrt{n+1}-1), & n\equiv 2 \pmod 4, \\[6pt]
\displaystyle \frac{2}{\sin(\pi/(2n))} + 2(\sqrt{n+1}-1), & n\equiv 1 \pmod 2.
\end{cases}
\]
\end{prop}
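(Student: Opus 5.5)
The formula follows by substituting the closed form for $\mathcal{E}(C_n)$ from the preceding proposition into the Remark
\[
\mathcal{E}(W_n)=\mathcal{E}(C_n)+\bigl|1+\sqrt{n+1}\bigr|+\bigl|1-\sqrt{n+1}\bigr|-|2|,
\]
which itself comes from item 3 of Proposition~\ref{prop:espectros}. The spectrum of $W_n$ is the spectrum of $C_n$ with the single eigenvalue $2$ removed, together with the two roots $1\pm\sqrt{n+1}$ of $x^2-2x-n$. Summing absolute values therefore gives $\mathcal{E}(C_n)$, minus the contribution $2$ of the removed eigenvalue, plus the contributions of the two new ones.

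One point needs care: the eigenvalue $2=\lambda_0^{(n)}$ occurs exactly once in $Sp(C_n)$, since $2\cos(2\pi k/n)=2$ only for $k=0$ when $0\le k\le n-1$. The multiset difference thus removes precisely one copy, contributing $-2$.

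It remains to simplify the two absolute values. Since $n\ge 3$, we have $\sqrt{n+1}\ge 2>1$. Hence $|1+\sqrt{n+1}|=1+\sqrt{n+1}$ and $|1-\sqrt{n+1}|=\sqrt{n+1}-1$, so their sum is $2\sqrt{n+1}$. Subtracting $2$ gives the common correction term $2\sqrt{n+1}-2=2(\sqrt{n+1}-1)$.

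Finally, split into the three residue classes $n\equiv 0\pmod 4$, $n\equiv 2\pmod 4$ and $n$ odd. In each case, replace $\mathcal{E}(C_n)$ by the corresponding closed form from \cite{Li}, namely $4\cos(\pi/n)/\sin(\pi/n)$, $4/\sin(\pi/n)$ and $2/\sin(\pi/(2n))$ respectively. The correction term is the same in all three cases, so no step is a genuine obstacle; the only things to check are the sign analysis of $1-\sqrt{n+1}$ and that $2$ is a simple eigenvalue of $C_n$.
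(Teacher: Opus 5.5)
Your proposal is correct and follows the paper's own argument. The paper likewise substitutes the closed forms of $\mathcal{E}(C_n)$ into the preceding Remark. Your sign analysis of $1-\sqrt{n+1}$ and your check that $2$ is a simple eigenvalue of $C_n$ just make explicit what the paper leaves implicit.
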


We conclude this section with the definition of the Jahangir graphs.

\begin{defi}\label{jahangir}
Let $m$ and $n$ be positive integers with $n \ge 3$ and $m\mid n$.  
The \textbf{Jahangir graph} $J_{m,n}$ is obtained by adding a new vertex, called the \textbf{center}, to the cycle $C_n$, and making it adjacent to exactly $n/m$ vertices of $C_n$, in such a way that within each block of $m$ consecutive vertices, precisely one is adjacent to the center.
\end{defi}

If $m=1$ then $J_{1,n}=W_n$, so the wheel graph is the first member of the Jahangir family.

The graphs $J_{2,6}$ and $J_{3,9}$ are shown in Figure~\ref{fig:Jahangir}. Note that $J_{2,6}$ is bipartite, whereas $J_{3,9}$ is not.

\begin{figure}[h]
    \centering
    \includegraphics[scale=1.3]{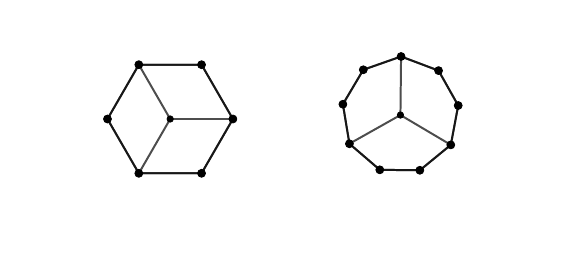}
    \caption{The graphs $J_{2,6}$ and $J_{3,9}$.}
    \label{fig:Jahangir}
\end{figure}

\section{Common eigenvalues of $J_{m,n}$ and $C_n$}
\label{sec:common}

In this section we identify the eigenvalues that the Jahangir graph $J_{m,n}$ inherits from the cycle $C_n$. 
We begin with a structural observation: the bipartiteness of $J_{m,n}$ depends exactly on the parity of $m$.

\begin{lem}
The graph $J_{m,n}$ is bipartite if and only if $m$ is even.
\end{lem}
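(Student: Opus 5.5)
Label the cycle vertices $v_0,\dots,v_{n-1}$ (indices mod $n$) and let $c$ be the center. By Definition~\ref{jahangir} and the block structure, we may assume the center is adjacent exactly to $v_0, v_m, v_{2m},\dots,v_{n-m}$, i.e.\ to the $v_i$ with $m\mid i$. The plan is to use the characterization that a graph is bipartite if and only if it has no odd cycle. This gives an explicit bipartition when $m$ is even and an odd cycle when $m$ is odd.

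Suppose $m$ is even. Then $n$ is even because $m\mid n$, so the parity of an index $i$ is well defined modulo $n$. Put $X=\{v_i : i \text{ even}\}$ and $Y=\{v_i : i\text{ odd}\}\cup\{c\}$. Each cycle edge $v_iv_{i+1}$ joins indices of opposite parity. The wraparound edge $v_{n-1}v_0$ is fine as well, since $n-1$ is odd. Every spoke $cv_{km}$ goes to an even index because $m$ is even, so it joins $Y$ to $X$. Hence $(X,Y)$ is a bipartition and $J_{m,n}$ is bipartite.

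Suppose $m$ is odd. If $n/m\ge 2$, take the cycle $c, v_0, v_1,\dots,v_m, c$. It uses the spokes $cv_0$ and $cv_m$ and the $m$ path edges $v_0v_1,\dots,v_{m-1}v_m$, so its length is $m+2$, which is odd. If $n/m=1$, i.e.\ $m=n$, there is only one spoke. In that case $J_{n,n}$ contains the cycle $C_n$ itself, which is odd since $n=m$ is odd. In either case there is an odd cycle, so $J_{m,n}$ is not bipartite.

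The argument is elementary. The only points needing care are that the index parity respects the wraparound (this is where $n$ even is used) and the degenerate case $m=n$, where the two-spoke cycle does not exist.
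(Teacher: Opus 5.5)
Your proof is correct, and it differs from the paper's mainly in the case $m$ even.

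\textbf{Case $m$ odd.} The paper lists all cycle lengths of $J_{m,n}$ as $n$ and $km+2$. Your two witnesses are contained in that list: the cycle of length $m+2$, and the rim $C_n$ itself when $m=n$. So here the two arguments agree.

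\textbf{Case $m$ even.} The paper needs two things: that its enumeration of cycles is complete, and the nontrivial direction of the criterion ``no odd cycle $\Rightarrow$ bipartite''. You instead write down the parity $2$-coloring directly, which uses only the definition of bipartiteness.

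\textbf{What each route buys.} Your route does not depend on the completeness of the cycle classification. The paper asserts that classification without justification; one would have to argue that any cycle through the center uses exactly two spokes plus one arc of the rim whose length is a multiple of $m$. Your route also handles the degenerate case $m=n$ explicitly. In the paper's formula that case is only implicit: the range $k=1,\dots,\frac{n}{m}-1$ is empty and only the rim survives. The paper's route, in exchange, settles both directions at once from a single list of cycle lengths.

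Both arguments rely on the spokes being equally spaced, $c\sim v_i \iff m\mid i$. You state this normalization explicitly, which is appropriate.
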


\begin{proof}
A graph is bipartite if and only if it contains no odd cycle.  
The cycles of $J_{m,n}$ have lengths $n$ and $km+2$ for $k = 1,\dots,\frac{n}{m} - 1$.  
Thus $J_{m,n}$ has no odd cycles precisely when $m$ is even.
\end{proof}

Before stating the main results of this section, recall that the vectors $y_k$ defined in Proposition~\ref{prop:matrizCirculante} form an eigenbasis for the adjacency matrix of $C_n$.  
The next lemma isolates a key summation that determines which of these eigenvectors extend to eigenvectors of $J_{m,n}$.

\begin{lem}\label{lem:sumatoria}
Let $y_k \in \mathcal{Y}$ be as in Proposition~\ref{prop:matrizCirculante}. Then
\[
\sum_{\substack{0\le i \le n-1\\ i \equiv 0 \,(\mathrm{mod}\ m)}} (\rho_k)^i
=
\begin{cases}
\displaystyle \frac{n}{m}, & \text{if $k$ is a multiple of }\tfrac{n}{m},\\[4pt]
0, & \text{otherwise}.
\end{cases}
\]
\end{lem}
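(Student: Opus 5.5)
The plan is to reduce the sum to a finite geometric series in a single root of unity. Write $d = n/m$, which is a positive integer by Definition~\ref{jahangir}. The indices $i$ with $0\le i\le n-1$ and $i\equiv 0 \pmod m$ are exactly $i = jm$ for $j=0,1,\dots,d-1$. The sum therefore becomes
\[
\sum_{j=0}^{d-1} (\rho_k)^{jm} = \sum_{j=0}^{d-1} \omega^j, \qquad \omega := \rho_k^{\,m} = e^{\frac{2\pi k m}{n} i} = e^{\frac{2\pi k}{d} i}.
\]
So the whole statement comes down to evaluating a geometric sum of powers of $\omega$, which is a $d$-th root of unity.

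Next I split according to whether $\omega = 1$. We have $\omega=1$ exactly when $k/d$ is an integer, that is, when $k$ is a multiple of $d = n/m$. In that case every term of the sum equals $1$, so the sum is $d = n/m$, which is the first case of the statement.

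Otherwise $\omega\neq 1$, and the geometric series formula gives
\[
\sum_{j=0}^{d-1}\omega^j = \frac{1-\omega^{d}}{1-\omega}.
\]
Here $\omega^d = e^{2\pi k i} = 1$, so the numerator vanishes and the sum is $0$. This is the second case of the statement.

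There is no real obstacle in this argument. The one point needing care is the re-indexing: since $m\mid n$, the admissible indices are exactly $jm$ with $0\le j\le d-1$, because $(d-1)m = n-m \le n-1$ and $dm = n$ falls outside the range. This is what makes the sum run over a full period of the $d$-th root of unity $\omega$. I will also note that $k$ ranges over $0,\dots,n-1$, so "multiple of $n/m$" means $k\in\{0, d, 2d,\dots,(m-1)d\}$; in particular, exactly $m$ of the eigenvectors $y_k$ give a nonzero sum.
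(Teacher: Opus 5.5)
Your proof is correct and follows the paper's argument. Like the paper, you re-index the sum as a geometric series in $\rho_k^{\,m}$, note that $\rho_k^{\,m}=1$ exactly when $k$ is a multiple of $n/m$, and otherwise conclude the sum vanishes because $(\rho_k^{\,m})^{n/m}=\rho_k^{\,n}=1$.
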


\begin{proof}
We compute:
\[
\sum_{\substack{0\le i\le n-1\\ i\equiv 0 \,(\mathrm{mod}\ m)}} (\rho_k)^i
=
\sum_{j=0}^{\frac{n}{m}-1} (\rho_k)^{mj}
=
\sum_{j=0}^{\frac{n}{m}-1} (\rho_k^{\,m})^j.
\]

If $\rho_k^m = 1$, then every term equals 1 and the sum is $n/m$.  
Otherwise,
\[
\sum_{j=0}^{\frac{n}{m}-1} (\rho_k^{m})^j
=
\frac{\rho_k^{n}-1}{\rho_k^{m}-1}
=
0,
\]
since $\rho_k$ is an $n$th root of unity.  

Finally,
\[
\rho_k^{m}=1
\quad\Longleftrightarrow\quad
e^{2\pi i k m/n}=1
\quad\Longleftrightarrow\quad
k \text{ is a multiple of } \tfrac{n}{m}.
\]
\end{proof}

The next observation records the block structure of $A(J_{m,n})$.

\begin{obs}
Since $C_n$ is an induced subgraph of $J_{m,n}$, the adjacency matrix of $J_{m,n}$ has the form
\[
A(J_{m,n})
=
\begin{pmatrix}
A(C_n) & r\\
r^{t} & 0
\end{pmatrix},
\]
where the vector $r$ encodes the neighbors of the center.
\end{obs}

We now determine which eigenvectors of $C_n$ remain eigenvectors of $J_{m,n}$. 

\begin{prop}\label{prop:common-no-multiple}
Let $m$ divide $n$, and let $1 \le k \le n-1$ be an integer that is not a multiple of $n/m$. 
Then
\[
x_k = \begin{pmatrix} y_k \\ 0 \end{pmatrix}
\]
is an eigenvector of $J_{m,n}$ with eigenvalue $\lambda_k^{(n)}$.
\end{prop}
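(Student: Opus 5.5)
The plan is a direct block computation using the form of $A(J_{m,n})$ recorded in the Remark above. Label the vertices of $C_n$ as $0,1,\dots,n-1$, matching the coordinates of $y_k$. Since the graph is determined only up to rotation, we may assume the center is adjacent exactly to the vertices $i$ with $i\equiv 0 \pmod m$. Then $r$ is the $0/1$ vector with $r_i=1$ precisely when $i\equiv 0\pmod m$.

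Multiplying out,
\[
A(J_{m,n})\,x_k=\begin{pmatrix} A(C_n) & r\\ r^{t} & 0\end{pmatrix}\begin{pmatrix} y_k\\ 0\end{pmatrix}=\begin{pmatrix} A(C_n)\,y_k\\ r^{t}y_k\end{pmatrix}.
\]
The first block is handled by Proposition~\ref{prop:matrizCirculante}: $A(C_n)y_k=\lambda_k^{(n)}y_k$, where the first row of $A(C_n)$ gives $\lambda_k=\rho_k+\rho_k^{n-1}=2\cos(2\pi k/n)=\lambda_k^{(n)}$. The last coordinate is
\[
r^{t}y_k=\sum_{\substack{0\le i\le n-1\\ i\equiv 0\,(\mathrm{mod}\ m)}}\rho_k^{\,i}.
\]
By Lemma~\ref{lem:sumatoria}, this sum vanishes because $k$ is not a multiple of $n/m$. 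Hence $A(J_{m,n})x_k=\lambda_k^{(n)}x_k$, and $x_k\neq 0$ since its first coordinate is $1$.

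The only point needing care is the labeling convention. If the center were attached to the vertices $i\equiv s\pmod m$ for some shift $s$, the sum would become $\rho_k^{\,s}$ times the sum in Lemma~\ref{lem:sumatoria}, which is still $0$. So the choice of labeling loses no generality.

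One further remark: if a real eigenvector is wanted, note that $k$ and $n-k$ both fail to be multiples of $n/m$ when one of them does. Taking real and imaginary parts of $x_k$ then gives real eigenvectors for the same eigenvalue. This is not needed for the statement itself.
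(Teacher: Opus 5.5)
Your proposal is correct and follows the paper's proof: multiply the block form of $A(J_{m,n})$ against $x_k$, use the circulant eigenvalue relation for the top block, and show the last coordinate vanishes by Lemma~\ref{lem:sumatoria}. Your extra remarks are all valid and are points the paper leaves implicit: the shift invariance of the vanishing sum, the fact that $x_k\neq 0$, and the real eigenvectors.
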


\begin{proof}
Using the block form of $A(J_{m,n})$,
\[
A(J_{m,n}) x_k
=
\begin{pmatrix}
A(C_n) y_k\\[4pt]
\displaystyle \sum_{\substack{0\le j\le n-1\\ j\equiv 0\,(\mathrm{mod}\ m)}} \rho_k^{\,j}
\end{pmatrix}
=
\begin{pmatrix}
\lambda_k^{(n)} y_k \\[4pt]
0
\end{pmatrix},
\]
where the lower entry vanishes by Lemma~\ref{lem:sumatoria}.  
Thus $A(J_{m,n})x_k = \lambda_k^{(n)} x_k$.
\end{proof}

\begin{cor}\label{cor:vapsCnJmn}
If $1 \le k \le n-1$ and $k$ is not a multiple of $n/m$, then
\[
\lambda_k^{(n)} = 2\cos\!\left(\frac{2\pi k}{n}\right)
\]
is also an eigenvalue of $J_{m,n}$ (with the same multiplicity as in $C_n$).
\end{cor}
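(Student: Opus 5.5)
The corollary follows from Proposition~\ref{prop:common-no-multiple} once the multiplicity claim is handled carefully. The issue is that $2\cos(2\pi k/n)=2\cos(2\pi(n-k)/n)$, so the eigenvalues of $C_n$ other than $\pm 2$ have multiplicity two, with eigenvectors $y_k$ and $y_{n-k}$. The plan is therefore to show that the whole eigenspace of $C_n$ for $\lambda=\lambda_k^{(n)}$ lifts, via $y\mapsto (y,0)^t$, into the $\lambda$-eigenspace of $J_{m,n}$. This gives multiplicity in $J_{m,n}$ at least that in $C_n$.

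\textbf{Step 1: all indices with the same eigenvalue behave alike.} Fix $k$ with $1\le k\le n-1$ and $\frac{n}{m}\nmid k$. The indices $j\in\{0,\dots,n-1\}$ with $\lambda_j^{(n)}=\lambda_k^{(n)}$ are exactly $j=k$ and $j=n-k$. If $\frac{n}{m}\mid n-k$, then since $\frac{n}{m}\mid n$ we would get $\frac{n}{m}\mid k$, a contradiction. So $n-k$ also satisfies the hypothesis of Proposition~\ref{prop:common-no-multiple}, and both $x_k$ and $x_{n-k}$ are eigenvectors of $J_{m,n}$ with eigenvalue $\lambda_k^{(n)}$.

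\textbf{Step 2: linear independence.} The vectors $y_k$ and $y_{n-k}$ are linearly independent whenever $k\neq n-k$, since they are distinct columns of the Fourier (Vandermonde) matrix. Their lifts $x_k$ and $x_{n-k}$ are therefore independent as well. When $k=n/2$, the eigenvalue is $-2$ with multiplicity one in $C_n$, and the single vector $x_k$ suffices. In either case the $\lambda_k^{(n)}$-eigenspace of $A(J_{m,n})$ has dimension at least the multiplicity of $\lambda_k^{(n)}$ in $C_n$. Because $A(J_{m,n})$ is symmetric, geometric and algebraic multiplicities coincide. To avoid complex vectors one can instead take real and imaginary parts, $\cos$ and $\sin$ vectors, which span the same space; this step is harmless.

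\textbf{Step 3: what "the same multiplicity" means.} The main obstacle is the reverse inequality: the multiplicity in $J_{m,n}$ could exceed that in $C_n$ if one of the remaining eigenvalues of $J_{m,n}$ coincides with $\lambda_k^{(n)}$. These remaining eigenvalues come from eigenvectors orthogonal to all the $x_j$, namely vectors supported on the span of $\{y_j : \frac{n}{m}\mid j\}$ together with the center coordinate. I would state the corollary as the claim that the lifted eigenvectors contribute exactly the $C_n$-multiplicity, so that $\phi(J_{m,n},x)$ is divisible by $\prod_{j}(x-\lambda_j^{(n)})$ over the non-multiples $j$. Any accidental coincidence with the remaining $m+1$ eigenvalues is then accounted for in the complementary factor analysed later via equitable partitions. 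With that reading, Steps 1 and 2 complete the proof.
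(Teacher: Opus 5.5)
Your argument is correct and is essentially the paper's: the corollary is read off from Proposition~\ref{prop:common-no-multiple}. You also supply the detail the paper leaves implicit, namely that $n-k$ is not a multiple of $n/m$ either, so both $y_k$ and $y_{n-k}$ lift and the multiplicity in $J_{m,n}$ is at least that in $C_n$.

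Your caution in Step~3 is justified, because the exact-multiplicity reading is false. For example, $J_{2,4}\cong K_{2,3}$ has $0$ with multiplicity $3$, while $C_4$ has it with multiplicity $2$. More generally, for $4\mid n$ the value $0=\lambda_{n/4}^{(n)}$ is also a root of $\pi_{J_{2,n}}(x)$, as Theorem~\ref{teo:J2n} shows. So the lower-bound/divisibility reading you adopt is the right one, and it is also how the paper uses the corollary in Remark~\ref{obs:cantidadVapsComunesJmnCn}.
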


The previous result captures all eigenvalues of $C_n$ except those coming from indices $k$ that are multiples of $n/m$; these correspond to the spectrum of $C_m$.

\begin{obs}\label{obs:espectroCnmenosCm}
If $k = i \cdot \frac{n}{m}$ for $i = 0,\dots,m-1$, then
\[
\lambda_k^{(n)} = 2\cos\!\left(\tfrac{2\pi i}{m}\right) = \lambda_i^{(m)},
\]
the eigenvalues of $C_m$.  
Thus the multisets satisfy
\[
Sp(C_n) - Sp(C_m) \ \sqsubset\ Sp(J_{m,n}),
\]
and hence
\[
\frac{\phi(C_n,x)}{\phi(C_m,x)}
\text{ divides }
\phi(J_{m,n},x).
\]
\end{obs}

\begin{obs}\label{obs:mIgualDos}
When $m=2$, we have {$\lambda_k^{(n)} = 2\cos\!\left(\frac{2\pi k}{n}\right)$ is an eigenvalue of $J_{m,n}$ with the same multiplicity as in $C_n$ if $k$ is not a multiple of $\frac{n}{2}$, thus, } 
\[
Sp(C_n) - [-2,2] \ \sqsubset\ Sp(J_{2,n}),
\]
and therefore
\[
\frac{\phi(C_n,x)}{(x-2)(x+2)}
\mid
\phi(J_{2,n},x).
\]
%Since $n$ is even, each of these eigenvalues appears in $J_{2,n}$ with multiplicity $2$.
\end{obs}

\begin{obs}\label{obs:cantidadVapsComunesJmnCn}
Corollary~\ref{cor:vapsCnJmn} shows that $J_{m,n}$ and $C_n$ share at least $n-m$ eigenvalues (counted with multiplicity).
\end{obs}

We next show that essentially all eigenvalues of $C_n$ appear in $J_{m,n}$, although some may lose multiplicity.  
To that end we recall the notion of interlacing, following \cite{Hae}.

Let $\lambda_1 \ge \cdots \ge \lambda_n$ and $\mu_1 \ge \cdots \ge \mu_m$ be real sequences with $m<n$.  
We say that $(\mu_i)$ \textbf{interlaces} $(\lambda_i)$ if
\[
\lambda_i \ge \mu_i \ge \lambda_{n-m+i},
\qquad i=1,\dots,m.
\]

Corollary 2.2 of \cite{Hae} states that if $B$ is a principal submatrix of a symmetric matrix~$A$, then the eigenvalues of $B$ interlace those of $A$.

\begin{prop}\label{prop:interlacing}
Every eigenvalue of $C_n$ other than $2$ (and also $-2$ when $m$ is even) is an eigenvalue of $J_{m,n}$.
\end{prop}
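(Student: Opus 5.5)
Since $C_n$ is an induced subgraph of $J_{m,n}$, $A(C_n)$ is a principal submatrix of $A(J_{m,n})$ of size $n$, with $n+1$ the size of the full matrix. Our plan is to combine Corollary~\ref{cor:vapsCnJmn} with interlacing (Corollary 2.2 of \cite{Hae}) to recover the eigenvalues that might be missing.

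Write $\theta_1\ge\cdots\ge\theta_{n+1}$ for the eigenvalues of $J_{m,n}$ and $\mu_1\ge\cdots\ge\mu_n$ for those of $C_n$. Interlacing with one deleted vertex gives
\[
\theta_i\ge\mu_i\ge\theta_{i+1},\qquad i=1,\dots,n.
\]
The standard consequence is that if an eigenvalue $\mu$ of $C_n$ has multiplicity $t$, then $\mu$ is an eigenvalue of $J_{m,n}$ with multiplicity at least $t-1$. Indeed, if $\mu_i=\cdots=\mu_{i+t-1}=\mu$, then for $i<j\le i+t-1$ we have $\mu=\mu_{j-1}\ge\theta_j\ge\mu_j=\mu$, which forces $\theta_j=\mu$.

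The eigenvalues of $C_n$ are $2\cos(2\pi k/n)$. Every value other than $\pm 2$ arises from the pair $k$ and $n-k$ with $k\ne n-k$, so it has multiplicity exactly $2$. The value $2$ always has multiplicity $1$, and $-2$ has multiplicity $1$ when $n$ is even. The interlacing argument above therefore shows directly that every eigenvalue $\mu\neq\pm2$ of $C_n$ is an eigenvalue of $J_{m,n}$, with multiplicity at least $1$.

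The remaining case is $\mu=-2$ when $m$ is odd. If $n$ is odd, then $-2$ is not an eigenvalue of $C_n$ and there is nothing to prove. If $n$ is even and $m$ is odd, then $n/m$ is even. Hence $k=n/2$ is not a multiple of $n/m$: a multiple of $n/m$ has the form $j\,n/m$, and $j\,n/m=n/2$ would require $j=m/2$, which is not an integer. Corollary~\ref{cor:vapsCnJmn} then gives $\lambda^{(n)}_{n/2}=-2$ as an eigenvalue of $J_{m,n}$.

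We expect the main obstacle to be keeping the multiplicity bookkeeping straight. The statement only asserts membership in the spectrum, and the interlacing argument only yields multiplicity at least $t-1$, which is where the loss of multiplicity mentioned before the proposition comes from. Alternatively, Corollary~\ref{cor:vapsCnJmn} already handles every $k$ that is not a multiple of $n/m$, so interlacing is only needed for the indices $k=i\,n/m$, that is, the eigenvalues of $C_m$ other than $\pm2$. Each of these has multiplicity $2$ in $C_n$, so interlacing covers them uniformly.
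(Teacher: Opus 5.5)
Your proof is correct and uses the same interlacing argument as the paper, stated more carefully as the bound that an eigenvalue of multiplicity $t$ in $C_n$ remains an eigenvalue of $J_{m,n}$ with multiplicity at least $t-1$. You also go beyond the paper in one place: its proof asserts that every eigenvalue of $C_n$ other than $2$ has multiplicity $2$ when $m$ is odd, which is false for $-2$ when $n$ is even, and your separate argument via Corollary~\ref{cor:vapsCnJmn} with $k=n/2$ (not a multiple of $n/m$, since $n/m$ is even) is what closes that case.
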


\begin{proof}
Since $A(C_n)$ is a principal submatrix of $A(J_{m,n})$, their eigenvalues interlace.  
For any eigenvalue $\lambda$ of $C_n$ with $\lambda \ne 2$ (and $\lambda \ne -2$ if $m$ is even), the multiplicity $mg(\lambda)$ in $C_n$ is $2$.  
Thus there must exist an eigenvalue $\mu$ of $J_{m,n}$ such that
\[
\lambda \le \mu \le \lambda,
\]
forcing $\mu = \lambda$.
\end{proof}

\begin{obs}
For $m=2$, Proposition~\ref{prop:interlacing} gives no additional information beyond what was already obtained in Remark~\ref{obs:mIgualDos}.
\end{obs}

\begin{cor}\label{cor:vapsDeCmEnJmn}
Let $m\ge 3$.  
Since $Sp(C_m)\sqsubset Sp(C_n)$, Proposition~\ref{prop:interlacing} implies that the eigenvalues of $C_m$ different from $2$ (and $-2$ if $m$ is even) also appear in $Sp(J_{m,n})$.  
Explicitly,
\[
\left[\lambda_i^{(m)} : i = 1,\dots,\tfrac{m-2}{2}\right]
\quad\text{if $m$ is even},
\qquad
\left[\lambda_i^{(m)} : i = 1,\dots,\tfrac{m-1}{2}\right]
\quad\text{if $m$ is odd}.
\]
\end{cor}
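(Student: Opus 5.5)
The plan is to combine the containment $Sp(C_m)\sqsubset Sp(C_n)$ with Proposition~\ref{prop:interlacing}, and then simply list the eigenvalues of $C_m$ that the statement asks for.

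\textbf{Step 1: $Sp(C_m)$ sits inside $Sp(C_n)$.} By Remark~\ref{obs:espectroCnmenosCm}, for each $i=0,\dots,m-1$ the index $k=i\cdot\frac{n}{m}$ satisfies $\lambda_k^{(n)}=\lambda_i^{(m)}$. Distinct $i$ give distinct $k\in\{0,\dots,n-1\}$, so this map is injective on indices. Hence $Sp(C_m)\sqsubset Sp(C_n)$ as multisets.

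\textbf{Step 2: apply Proposition~\ref{prop:interlacing}.} Every eigenvalue of $C_m$ other than $2$, and other than $-2$ when $m$ is even, is an eigenvalue of $C_n$. It is also different from $\pm 2$ in the sense required by that proposition. So it lies in $Sp(J_{m,n})$.

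One point needs care: when $m$ is odd, $-2$ is not an eigenvalue of $C_m$, so nothing has to be excluded. The exclusion of $-2$ only matters for $m$ even, where $-2=\lambda_{m/2}^{(m)}$. This matches the hypothesis of Proposition~\ref{prop:interlacing}. That proposition already covers any eigenvalue $\lambda\neq\pm2$ of $C_n$, and for such $\lambda$ the multiplicity in $C_n$ is $2$. We do not need multiplicity in $C_m$, since the claim concerns membership of values only.

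\textbf{Step 3: identify the surviving values explicitly.} By the symmetry $\lambda_i^{(m)}=\lambda_{m-i}^{(m)}$, the distinct values of $Sp(C_m)$ are $\lambda_i^{(m)}$ for $0\le i\le\lfloor m/2\rfloor$.
\begin{itemize}
\item Removing $i=0$ (value $2$), and also $i=m/2$ (value $-2$) when $m$ is even, leaves $i=1,\dots,\frac{m-2}{2}$ for $m$ even.
\item For $m$ odd it leaves $i=1,\dots,\frac{m-1}{2}$.
\end{itemize}
These index ranges are nonempty exactly when $m\ge3$, which explains the hypothesis.

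\textbf{Main obstacle.} The argument is routine. The only subtlety is the bookkeeping in Step 3: checking that $\cos(2\pi i/m)$ is strictly decreasing on $0\le i\le m/2$. This guarantees that no other index produces the excluded values $\pm2$, so the listed values are exactly the survivors.
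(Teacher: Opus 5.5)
Your proposal is correct and follows the paper's argument: the injective index map $i\mapsto i\cdot\frac{n}{m}$ from Remark~\ref{obs:espectroCnmenosCm} gives $Sp(C_m)\sqsubset Sp(C_n)$. Proposition~\ref{prop:interlacing} then places every value $\neq\pm2$, each of which has multiplicity $2$ in $C_n$, in $Sp(J_{m,n})$, and the strict monotonicity of $\cos$ on $[0,\pi]$ gives exactly the listed index ranges. You are also right to invoke that proposition only for $\lambda\neq\pm2$, since this is the case its interlacing proof actually covers.
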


\begin{obs}
The number of common eigenvalues of $J_{m,n}$ and $C_m$ is therefore at least $\tfrac{m-2}{2}$ when $m$ is even and $\tfrac{m-1}{2}$ when $m$ is odd.
\end{obs}

\section{Factorization of $\phi(J_{m,n},x)$ for $m$ even}
\label{sec:factorization}

In this section we obtain a closed factorization of the characteristic polynomial of the Jahangir 
graph $J_{m,n}$ when $m$ is even.  
The key tool is the theory of equitable partitions, which produces a quotient matrix whose eigenvalues 
lift to eigenvalues of the original graph.  
All remaining eigenvalues of $J_{m,n}$, those not inherited from $C_n$ or $C_m$, will arise as roots of this quotient polynomial.

\subsection{Equitable partitions and quotient matrices}
Let $G$ be a graph and $\pi=\{S_1,\dots,S_k\}$ a partition of $V(G)$.  
We call $\pi$ \emph{equitable} if each vertex in $S_i$ has exactly $b_{ij}$ neighbours in $S_j$, 
independently of the chosen vertex.  
The matrix $B=(b_{ij})$ is the adjacency matrix of the \emph{quotient multigraph} $G/\pi$.  

The following is a special case of Theorem~9.3.3 of \cite{God}.

\begin{prop}\label{prop:equitable}
If $\pi$ is an equitable partition of $G$, then the characteristic polynomial 
$\pi_G(x)=\det(xI - A(G/\pi))$ divides $\phi(G,x)$, and every eigenvalue of $A(G/\pi)$ is an 
eigenvalue of $A(G)$.  
Moreover, each eigenvector of $A(G/\pi)$ lifts to an eigenvector of $A(G)$.
\end{prop}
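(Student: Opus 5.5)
The argument is the standard one for equitable partitions. Let $P$ be the $|V(G)|\times k$ characteristic matrix of $\pi$: its $(v,j)$ entry is $1$ if $v\in S_j$ and $0$ otherwise. We first show that equitability is equivalent to the matrix identity
\[
A(G)\,P = P\,B, \qquad B=A(G/\pi).
\]
The $(v,j)$ entry of $A(G)P$ counts the neighbours of $v$ in $S_j$. If $v\in S_i$, the $(v,j)$ entry of $PB$ is $b_{ij}$. These agree exactly when $\pi$ is equitable.

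\emph{Lifting eigenvectors.} Suppose $Bz=\mu z$ with $z\neq 0$. Then
\[
A(G)(Pz)=PBz=\mu\,Pz .
\]
The columns of $P$ have disjoint nonempty supports, so $P$ has full column rank $k$. Hence $Pz\neq 0$, and $Pz$ is an eigenvector of $A(G)$ with eigenvalue $\mu$. This gives both the lifting claim and the statement that every eigenvalue of $B$ is an eigenvalue of $A(G)$. The vector $Pz$ is simply $z$ spread out to be constant on each cell.

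\emph{Divisibility of the characteristic polynomial.} The plan is to block-triangularize $A(G)$.

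1. The column space $U=\operatorname{col}(P)$ has dimension $k$ and is $A(G)$-invariant, by the identity above.
2. Since $A(G)$ is symmetric, $U^{\perp}$ is also invariant.
3. Extend the columns of $P$ to a basis of $\mathbb{R}^{|V(G)|}$ by appending a basis $Q$ of $U^{\perp}$, and set $M=[P\;Q]$, which is invertible.
4. Then
\[
M^{-1}A(G)M=\begin{pmatrix} B & 0\\ 0 & C\end{pmatrix}
\]
for some matrix $C$. The upper-left block is $B$ because $A(G)P=PB$ is expressed in the basis given by the columns of $P$.
5. Taking determinants gives $\phi(G,x)=\det(xI-B)\det(xI-C)$, so $\pi_G(x)\mid\phi(G,x)$.

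Symmetry is not actually needed. Any complement of $U$ yields a block upper-triangular form with $B$ in the corner, and the same determinant factorization follows.

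\emph{Main obstacle.} The only delicate point is the divisibility statement, which concerns multiplicities rather than just individual eigenvalues. $B$ need not be symmetric, so one cannot simply count eigenvectors. The change-of-basis argument above handles this directly, and it is the step I would write out with the most care.
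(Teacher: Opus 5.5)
Your proof is correct. The paper gives no proof of its own and simply cites Theorem~9.3.3 of \cite{God}, whose argument is the one you give: the characteristic matrix $P$, the identity $A(G)P=PB$, lifting $z\mapsto Pz$, and extending the columns of $P$ to a basis of $\mathbb{R}^{|V(G)|}$ to get a block upper-triangular form with $B$ in the corner.
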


{Thus, once an equitable partition is identified, the factorization of the characteristic polynomial of the quotient matrix can be used in the factorization of
$\phi(G,x)$.}

\subsection{An equitable partition of the Jahangir graph}
Fix integers $m,n$ with $m$ even and $m\mid n$.  
Write $J_{m,n}$ in the usual way as a cycle of length $n$ together with a center vertex adjacent to ${n}/{m}$ 
equally spaced vertices.

The natural geometry of $J_{m,n}$ yields an equitable partition
\[
\pi = \{S_1,S_2,\dots,S_{\frac{m}{2}+2}\},
\]
defined as follows:
\begin{itemize}
    \item $S_1$ is the center vertex.
    \item $S_2$ consists of the $n/m$ neighbours of the center.
    \item $S_3$ contains the neighbours of $S_2$ in the cycle, not in $S_1$.
    \item $S_i=\left\{x\in V(J_{m,n}):x\sim y,\ \text{with}\  y\in S_{i-1}\ \text{and}\  {x}\not\in S_{i-2} \right\}$, for $4\le i\le \frac{m}{2}+2$.
\end{itemize}

{Each class $S_i$ consists of exactly those vertices in $C_n$ whose distance from the nearest spoke is equal to $i-2$.}

The following proposition records the adjacency matrix of the quotient graph.

\begin{prop}\label{prop:quotient-even}
Let $m$ be even.  
Then the quotient matrix $A(J_{m,n}/\pi)$ is the $(\frac{m}{2}+2)\times(\frac{m}{2}+2)$ matrix
\[
Q_{m,n}=
\begin{pmatrix}
0 & \frac{n}{m} & 0 & 0 & \cdots & 0 & 0 & 0\\
1 & 0 & 2 & 0 & \cdots & 0 & 0 & 0\\
0 & 1 & 0 & 1 & \cdots & 0 & 0 & 0\\
0 & 0 & 1 & 0 & \cdots & 0 & 0 & 0\\
\vdots & \vdots & \vdots & \vdots & \ddots & \vdots & \vdots & \vdots\\
0 & 0 & 0 & 0 & \cdots & 0 & 1 & 0\\
0 & 0 & 0 & 0 & \cdots & 1 & 0 & 1\\
0 & 0 & 0 & 0 & \cdots & 0 & 2 & 0
\end{pmatrix}.
\]
\end{prop}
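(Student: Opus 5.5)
We need to show that $\pi$ is an equitable partition and that its quotient matrix is $Q_{m,n}$. The plan is to label the cycle vertices $v_0,\dots,v_{n-1}$ so that the spokes go to $v_j$ with $j\equiv 0 \pmod m$. Each $v_j$ then has a residue $r\equiv j \pmod m$, $0\le r\le m-1$, and a distance to the nearest spoke vertex along the cycle equal to $d(r)=\min(r,m-r)$. Since $m$ is even, $d$ takes the values $0,1,\dots,\tfrac m2$.

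\textbf{Step 1: identify the classes.} By induction on $i$, we show that $S_i=\{v_j : d(j \bmod m)=i-2\}$ for $2\le i\le \tfrac m2+2$. The base case $i=2$ holds by definition. For the inductive step, a cycle vertex with $d=t\ge1$ and $t<\tfrac m2$ has exactly one cycle neighbour at distance $t-1$ and one at distance $t+1$. Hence the vertices adjacent to $S_{i-1}$ and not lying in $S_{i-2}$ (nor in $S_1$) are exactly those with $d=i-2$. The center is excluded at $i=3$ by the definition of $S_3$. For larger $i$ the center is never adjacent to $S_{i-1}$, so it does not reappear. This shows that the classes partition $V(J_{m,n})$.

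\textbf{Step 2: count neighbours class by class.}
\begin{itemize}
\item The center ($S_1$) has all $n/m$ of its neighbours in $S_2$ and no others, which gives the first row.
\item A spoke vertex ($S_2$) has one neighbour in $S_1$, namely the center. Its two cycle neighbours $v_{j\pm1}$ have $r=1$ and $r=m-1$, so both have $d=1$ and lie in $S_3$. This gives the row $(1,0,2,0,\dots)$.
\item A vertex with $1\le d\le \tfrac m2-1$ has one cycle neighbour with $d-1$ and one with $d+1$, and it is not adjacent to the center. This gives the interior rows $(\dots,1,0,1,\dots)$.
\item A vertex with $d=\tfrac m2$ (that is, $r=\tfrac m2$, the antipodal position between two consecutive spokes) has both neighbours $r=\tfrac m2\pm1$ at distance $\tfrac m2-1$. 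This gives the last row $(\dots,0,2,0)$.
\end{itemize}
These counts depend only on the class, so $\pi$ is equitable and the matrix $(b_{ij})$ is exactly $Q_{m,n}$.

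\textbf{Degenerate case $m=2$.} Here the classes are only $S_1,S_2,S_3$, and $S_3$ is simultaneously the first and the last distance class. Each vertex of $S_3$ has two neighbours in $S_2$, so the quotient is
\[\begin{pmatrix}0&\tfrac n2&0\\1&0&2\\0&2&0\end{pmatrix}.\]
This agrees with the stated pattern when it is read with the row "1 0 2" for $S_2$ and the row "0 2 0" for $S_3$. The consistency of the displayed matrix for small $m$ is the main point needing care. The case $m=2$ needs the check just given, and $m=4$ needs the check that the middle "$1,0,1$" row appears exactly once.

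Finally, we must rule out wraparound effects, which are the main obstacle. The claim is that the two cycle neighbours of a vertex lie in distinct classes except at $d=0$ and $d=\tfrac m2$. This follows because $n\ge m$ and the residues $r\pm1$ are taken mod $m$ consistently around the whole cycle. In particular, when $n=m$ the single spoke still gives $r$ ranging over $0,\dots,m-1$ once, so the same counts hold.
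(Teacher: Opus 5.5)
Your proof is correct and takes the same approach as the paper, namely direct verification of the neighbour counts class by class. You carry it out in more detail than the paper does: residues mod $m$, an inductive identification of each $S_i$ as a level set of $d$, and a separate check of the degenerate case $m=2$. One small imprecision: the consistency of the residues $r\pm1$ around the whole cycle comes from $m\mid n$, not from $n\ge m$.
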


\begin{proof}
Each row corresponds to the number of neighbours a vertex in $S_i$ has in each class $S_j$.  
The symmetry of the cycle, along with the regular spacing of vertices adjacent to the center, 
ensures that the neighbour counts are constant within each class.  
The counted values match exactly the entries displayed above.
\end{proof}

Denote by 
\[
\pi_{J_{m,n}}(x)=\det\bigl(xI - Q_{m,n}\bigr)
\]
the characteristic polynomial of the quotient matrix.

\subsection{A determinant identity}
To compute $\pi_{J_{m,n}}(x)$, we require a recurrence for a truncated path-like matrix arising from the equitable partition.

\begin{lem}\label{lem:Mn}
Let $M_l$ be the $l\times l$ matrix
\[
M_l=
\begin{pmatrix}
0 & 1 & 0 & \cdots & 0 & 0 & 0\\
1 & 0 & 1 & \cdots & 0 & 0 & 0\\
0 & 1 & 0 & \cdots & 0 & 0 & 0\\
\vdots & \vdots & \vdots & \ddots & \vdots & \vdots & \vdots\\
0 & 0 & 0 & \cdots & 0 & 1 & 0\\
0 & 0 & 0 & \cdots & 1 & 0 & 1\\
0 & 0 & 0 & \cdots & 0 & 2 & 0
\end{pmatrix}.
\]
Then
\[
\det(xI - M_l)
=
\phi(P_l,x)-\phi(P_{l-2},x),
\qquad l\ge 2,
\]
with the convention $\phi(P_0,x)=1$.
\end{lem}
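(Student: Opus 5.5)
Expanding $\det(xI - M_l)$ along its last row yields exactly the stated identity. The matrix $xI-M_l$ is tridiagonal: its diagonal entries are all $x$, its superdiagonal entries are all $-1$, and its subdiagonal entries are $-1$ except the last one, which is $-2$. We therefore expand the determinant along the last row, whose only nonzero entries are $x$ in position $(l,l)$ and $-2$ in position $(l,l-1)$.

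The cofactor of the entry $x$ is the determinant of the leading $(l-1)\times(l-1)$ principal submatrix. That submatrix is $xI - A(P_{l-1})$, since deleting the last row and column removes the only asymmetric entry. So this term contributes $x\,\phi(P_{l-1},x)$.

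For the entry $-2$, we delete row $l$ and column $l-1$. The resulting minor has last column $(0,\dots,0,-1)^t$, because only the entry in position $(l-1,l)$ survives. Expanding that minor along its last column leaves the leading $(l-2)\times(l-2)$ block $xI - A(P_{l-2})$. Tracking the signs, the factor $(-1)^{l+(l-1)}$ from the outer expansion and $(-1)^{(l-1)+(l-1)}$ from the inner one, this term contributes
\[
(-2)\cdot(-1)\cdot(-1)\cdot\phi(P_{l-2},x) = -2\,\phi(P_{l-2},x).
\]
Hence
\[
\det(xI-M_l) = x\,\phi(P_{l-1},x) - 2\,\phi(P_{l-2},x).
\]

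The recurrence of Proposition~\ref{prop:recurrencia} gives $x\,\phi(P_{l-1},x) = \phi(P_l,x) + \phi(P_{l-2},x)$. Substituting this turns the expression into
\[
\det(xI-M_l) = \phi(P_l,x) - \phi(P_{l-2},x),
\]
which is the claim. For $l=2$ we need $\phi(P_0,x)=1$ and $\phi(P_1,x)=x$. The recurrence then gives $\phi(P_2,x) = x^2 - 1$, and the claim reads $x^2-2 = (x^2-1) - 1$, which checks directly against
\[
\det\begin{pmatrix} x & -1\\ -2 & x\end{pmatrix} = x^2 - 2.
\]

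The only delicate point is the sign bookkeeping in the minor of the $-2$ entry, together with the check that it really reduces to $\phi(P_{l-2},x)$ and not to a modified matrix. To guard against a sign slip, we would also confirm the formula for $l=3$ by direct computation. As an alternative route, one could instead prove by induction the tridiagonal recurrence $D_l = x D_{l-1} - b_{l-1}c_{l-1} D_{l-2}$, with off-diagonal product $b_{l-1}c_{l-1}=2$ at the last step, which gives the same formula.
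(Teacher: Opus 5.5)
Your proof is correct and follows the paper's route. You expand along the last row to get $x\,\phi(P_{l-1},x)-2\,\phi(P_{l-2},x)$, then apply the path recurrence to reach $\phi(P_l,x)-\phi(P_{l-2},x)$; this simply writes out the cofactor, sign and recurrence steps that the paper's one-line proof leaves to the reader.
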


\begin{proof}
A direct Laplace expansion along the last row gives the desired difference of path polynomials.
\end{proof}

\subsection{Computation of the quotient polynomial}
We now compute $\pi_{J_{m,n}}(x)$.

\begin{prop}\label{prop:quotient-polynomial}
Let $m$ be even. Then
\[
\pi_{J_{m,n}}(x)
=
\begin{cases}
\bigl((x^2-\frac{n}{4})x -2x\bigr)\,\phi(P_1,x)
   -2\bigl(x^2-\frac{n}{4}\bigr)\phi(P_0,x), & m=4,\\[6pt]
\bigl(x^3-(\frac{n}{m}+4)x\bigr)\,
     \phi(P_{\frac{m}{2}-1},x)
    +2\frac{n}{m}\,\phi(P_{\frac{m}{2}-2},x), & m\ge 6.
\end{cases}
\]
\end{prop}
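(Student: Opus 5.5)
Write $N=\tfrac{n}{m}$ and $l=\tfrac{m}{2}$, so that $Q_{m,n}$ is an $(l+2)\times(l+2)$ matrix. The plan is to compute $\det(xI-Q_{m,n})$ by Laplace expansion along the first two rows and columns, which isolate the center and the spoke class $S_2$. What remains is a tridiagonal block of the form $M_l$ from Lemma~\ref{lem:Mn}, and we reduce everything to path polynomials via that lemma.

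\emph{Expansion.} Expanding along the first row gives
\[
\pi_{J_{m,n}}(x)=x\,D_1 - N\,D_2,
\]
where $D_1$ is the determinant of $xI-Q_{m,n}$ with the first row and column deleted, and $D_2$ is the first-column cofactor coming from the entry $-1$ in position $(2,1)$.

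The matrix for $D_1$ has first row $(x,-2,0,\dots)$ and, below it, the block $xI-M_l$ built on the classes $S_3,\dots,S_{l+2}$. Expanding $D_1$ along its first row gives
\[
D_1=x\det(xI-M_l)-2\det(xI-M_{l-1}).
\]
Here the factor $2$ arises because the $(1,2)$ entry is $-2$ and the $(2,1)$ entry is $-1$. After removing the center row and column, the minor for $D_2$ collapses to $\det(xI-M_l)$ (up to sign).

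Combining these,
\[
\pi_{J_{m,n}}(x)=(x^2-N)\det(xI-M_l)-2x\det(xI-M_{l-1}).
\]
I will check the signs carefully on a small instance, for example by computing the $4\times4$ case $m=4$ directly.

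\emph{Substitution.} Next insert Lemma~\ref{lem:Mn}:
\[
\det(xI-M_l)=\phi(P_l)-\phi(P_{l-2}),\qquad \det(xI-M_{l-1})=\phi(P_{l-1})-\phi(P_{l-3}).
\]
Then use the recurrence of Proposition~\ref{prop:recurrencia} to rewrite everything in terms of $\phi(P_{l-1})$ and $\phi(P_{l-2})$:
\[
\phi(P_l)=x\phi(P_{l-1})-\phi(P_{l-2}),\qquad \phi(P_{l-3})=x\phi(P_{l-2})-\phi(P_{l-1}).
\]
Collecting terms,
\[
\pi_{J_{m,n}}(x)=(x^2-N)\bigl(x\phi(P_{l-1})-2\phi(P_{l-2})\bigr)-2x\bigl(2\phi(P_{l-1})-x\phi(P_{l-2})\bigr).
\]
The coefficient of $\phi(P_{l-1})$ is $x^3-(N+4)x$. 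The coefficient of $\phi(P_{l-2})$ is $-2x^2+2N+2x^2=2N$. This is exactly the $m\ge6$ formula.

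\emph{Validity range and the case $m=4$.} The substitution above uses $\det(xI-M_{l-1})$ with $l-1\ge2$ and needs $\phi(P_{l-3})$ to be defined, so it requires $l\ge3$, i.e.\ $m\ge6$. The main obstacle is therefore the small case $m=4$, where $l=2$ and the last block of $Q_{4,n}$ is only $1\times1$ with the doubled entry $2$ in position $(3,4)$. There I will not use the general recurrence. Instead I compute $\det(xI-Q_{4,n})$ directly from $D_1=x\det(xI-M_2)-2\cdot x$, keeping the $2$'s explicit. This yields $(x^2-N)(x^2-2)-2x^2$, which matches the stated form $\bigl((x^2-\tfrac n4)x-2x\bigr)\phi(P_1)-2(x^2-\tfrac n4)\phi(P_0)$ once we note $\phi(P_1)=x$ and $\phi(P_0)=1$.

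The boundary rows require care because the doubled entry $2$ shifts when $l$ is small. For $m=6$ I will sanity-check the result with the convention $\phi(P_0)=1$.
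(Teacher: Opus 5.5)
Your proposal is correct and follows essentially the paper's route: expansion along the first two rows gives $\pi_{J_{m,n}}(x)=(x^2-\tfrac{n}{m})\det(xI-M_l)-2x\det(xI-M_{l-1})$, after which you apply Lemma~\ref{lem:Mn} and the path recurrence for $m\ge 6$ and do $m=4$ by direct expansion. The sign you hedge on is not ambiguous, since the $(1,2)$ cofactor is exactly $\det(xI-M_l)$. The doubled entry of $Q_{4,n}$ sits at $(4,3)$ rather than $(3,4)$, but this slip is harmless because a tridiagonal determinant depends only on the products of paired off-diagonal entries.
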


\begin{proof}
For $m=4$, the quotient matrix is $4\times4$ and a direct expansion together with 
Lemma~\ref{lem:Mn} yields the stated expression.  

For $m\ge 6$, the matrix $Q_{m,n}$ contains a centered $M_l$ block of size $l=\frac{m}{2}$.  
Laplace expansion along the first two rows produces the linear combination of 
$\phi(P_{l},x)$, $\phi(P_{l-1},x)$, and $\phi(P_{l-2},x)$ recorded above.  
Applying the recurrence $\phi(P_l,x)=x\phi(P_{l-1},x)-\phi(P_{l-2},x)$ 
gives the desired formula.
\end{proof}

\subsection{The factorization Theorem for even {$m$}}\label{subsec:factor-even}
We now identify which eigenvalues of $C_n$ and $C_m$ appear in $J_{m,n}$ and isolate the remaining ones.

For completeness, we record the factorization of the characteristic polynomial of $J_{2,n}$.
When $m=2$, the center vertex is adjacent to $n/2$ equally spaced vertices of the cycle.
This yields an equitable partition with three classes:
\[
S_1=\{\text{center}\}, \quad
S_2=\{\text{vertices adjacent to the center}\}, \quad
S_3=\{\text{remaining $n/2$ cycle vertices}\}.
\]
With respect to this partition, the quotient matrix is
\[
Q_{2,n}
=
\begin{pmatrix}
0 & \tfrac{n}{2} & 0\\[2pt]
1 & 0 & 2\\[2pt]
0 & 2 & 0
\end{pmatrix}.
\]

Its characteristic polynomial is
\[
\pi_{J_{2,n}}(x)
=
\det(xI - Q_{2,n})
= x^{3} - \Bigl(4+\tfrac{n}{2}\Bigr)x,
\]
whose roots are
\[
\left[\sqrt{\tfrac{n}{2}+4},\,0,\,-\sqrt{\tfrac{n}{2}+4}\right].
\]

Since the only eigenvalues of $C_n$ that do not appear in $J_{2,n}$ are $\pm 2$,  
the characteristic polynomial of $J_{2,n}$ factorizes as
\[
\phi(J_{2,n},x)
=
\frac{\phi(C_n,x)}{(x-2)(x+2)} \cdot 
 \pi_{J_{2,n}}(x).
\]

This provides a complete description of the eigenvalues of $J_{2,n}$ and 
serves as the base case for the factorizations obtained for larger even values of $m$.

\begin{prop}\label{prop:remaining-eigenvalues}
Let $m\ge 4$ be even.  
Then the roots of $\pi_{J_{m,n}}(x)$ are precisely the $\frac{m}{2}+2$ eigenvalues of $J_{m,n}$ 
that are not obtained from:
\begin{enumerate}
    \item eigenvalues $\lambda_k^{(n)}$ of $C_n$ with $k$ not divisible by $n/m$, and
    \item eigenvalues $\lambda_i^{(m)}$ of $C_m$ with $1\le i\le \frac{m-2}{2}$.
\end{enumerate}
\end{prop}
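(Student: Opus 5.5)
We plan to establish the factorization
\[
\phi(J_{m,n},x)=\frac{\phi(C_n,x)}{\phi(C_m,x)}\cdot\prod_{i=1}^{\frac{m-2}{2}}\bigl(x-\lambda_i^{(m)}\bigr)\cdot \pi_{J_{m,n}}(x),
\]
from which the statement follows at once. The argument is a dimension count combined with a linear-independence argument that compares the lifted quotient eigenvectors with the explicit eigenvectors from Section~\ref{sec:common}.

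\textbf{Step 1: counting.} There are $n-m$ eigenvectors $x_k$ of Proposition~\ref{prop:common-no-multiple}, with $k$ not a multiple of $n/m$. There are also $\frac{m-2}{2}$ eigenvalues $\lambda_i^{(m)}$ of $C_m$ from Corollary~\ref{cor:vapsDeCmEnJmn}. The total $n+1$ equals $(n-m)+\frac{m-2}{2}+(\frac{m}{2}+2)$, so the $\frac{m}{2}+2$ roots of $\pi_{J_{m,n}}$ exactly fill the remaining count.

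\textbf{Step 2: orthogonality.} The lifted quotient eigenvectors (Proposition~\ref{prop:equitable}) are constant on each class $S_i$, so they lie in the span $V_\pi$ of the characteristic vectors of $S_1,\dots,S_{\frac m2+2}$, which has dimension $\frac m2+2$ and is $A$-invariant. The $x_k$ are orthogonal to $V_\pi$ (in the Hermitian sense). To see this, note that each $S_i$ with $i\ge 2$ is a union of at most two cosets of the form $\{j\equiv a \pmod m\}$, and a computation as in Lemma~\ref{lem:sumatoria} shows that the sum of $\rho_k^{\,j}$ over such a coset vanishes when $k$ is not a multiple of $n/m$. The center coordinate of $x_k$ is $0$.

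\textbf{Step 3: the $C_m$ eigenvalues.} These need eigenvectors in $V_\pi^{\perp}$ that are orthogonal to the $x_k$. For $k=i\,n/m$ with $1\le i\le\frac{m-2}{2}$, the vector $y_k$ is $m$-periodic. We take the real "sine-type" combination $(y_k-\overline{y_k})/2i$, with entries $\sin(2\pi ij/m)$ for $j=0,\dots,n-1$, and center coordinate $0$. Its entries on the spoke vertices $j\equiv0$ vanish, so it is an eigenvector of $J_{m,n}$ for $\lambda_i^{(m)}$. It is antisymmetric under the reflection $j\mapsto -j$, which fixes every $S_i$, hence it is orthogonal to $V_\pi$. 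It is also orthogonal to all $x_k$, since it lies in the span of the $y_{\pm i n/m}$ and these indices are multiples of $n/m$.

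\textbf{Step 4: conclusion.} We obtain an orthogonal decomposition of $\mathbb{C}^{n+1}$ into $A$-invariant pieces:
\begin{itemize}
\item the span of the $x_k$, of dimension $n-m$;
\item the span of the sine vectors, of dimension $\frac{m-2}{2}$;
\item $V_\pi$, of dimension $\frac m2+2$.
\end{itemize}
Since the dimensions add up to $n+1$, these spaces exhaust $\mathbb{C}^{n+1}$. The restriction of $A$ to $V_\pi$ is represented by $Q_{m,n}$, so its characteristic polynomial is $\pi_{J_{m,n}}(x)$, and the claimed factorization follows with multiplicities. Hence the roots of $\pi_{J_{m,n}}$, counted with multiplicity, are exactly the eigenvalues left over after removing items 1 and 2.

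\textbf{Main obstacle.} The main difficulty is Step 3: verifying that the sine vectors really are orthogonal to $V_\pi$, and confirming that the classes $S_i$ are reflection-symmetric unions of residue classes mod $m$. Once this holds, independence is automatic, and no accidental coincidence of eigenvalues can cause a miscount.
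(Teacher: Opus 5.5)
Your proof is correct. For the first family it is the paper's argument: both show $\langle x_k,\mathbf{1}_{S_j}\rangle=0$ for every class by reducing to the coset sums of Lemma~\ref{lem:sumatoria}. The routes split at the second family. The paper works with eigenvalues rather than eigenvectors there. Interlacing (Proposition~\ref{prop:interlacing} and Corollary~\ref{cor:vapsDeCmEnJmn}) puts each $\lambda_i^{(m)}$ in $Sp(J_{m,n})$. The closed form of Proposition~\ref{prop:quotient-polynomial} then shows these values are not roots of $\pi_{J_{m,n}}$: for $m=4$ because $\pi_{J_{4,n}}(0)=\frac n2$, and for $m\ge 6$ because at a root of $\phi(P_{\frac m2-1},x)$ the polynomial reduces to $2\frac nm\,\phi(P_{\frac m2-2},x)\neq 0$. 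A dimension count closes the argument. You instead exhibit the sine eigenvectors and get their orthogonality to $V_\pi$ for free from the reflection $j\mapsto -j$, which gives an explicit orthogonal direct sum of three invariant subspaces. This buys you several things. Multiplicities are handled automatically, whereas the paper's closing count tacitly passes to the orthogonal complement of $\mathrm{span}\{x_k\}\oplus V_\pi$; as printed, its figure $n-1-(m-2)$ is right only for $m=4$, and your $(n-m)+\frac{m-2}{2}$ is the correct one. You never need interlacing or the closed form of $\pi_{J_{m,n}}$. Nothing you do depends on the parity of $m$: for odd $m$ the distance classes are still reflection-invariant unions of residue classes mod $m$ (the last class now has diagonal entry $1$ in the quotient matrix). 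There are $\frac{m+3}{2}$ such classes and $\frac{m-1}{2}$ sine vectors, and $(n-m)+\frac{m-1}{2}+\frac{m+3}{2}=n+1$, so the same decomposition answers the paper's Problem~3 affirmatively, with $\frac{m+3}{2}$ in place of $\frac m2+2$. What the paper's route gives in return is the extra fact that no $\lambda_i^{(m)}$ is a root of $\pi_{J_{m,n}}$, so these are simple eigenvalues of $J_{m,n}$. Your decomposition does not exclude such a coincidence on its own, although the statement does not require it.
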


\begin{proof} 

%\begin{itemize}
 %   \item the eigenvectors $x_k$ of $J_{m,n}$ associated to $\lambda_k^{(n)}$ (eigenvalue of  $C_n$), with $k$ not multiple of $\frac{n}{m}$ are independently linear with the eigenvectors that come from a root of $\pi_{J_{m,n}}$, %esto vale para cualquier m incluso impar
  %  \item the eigenvalues of $J_{m,n}$ common to $C_m$ given in Corollary \ref{cor:vapsDeCmEnJmn} are not roots of $\pi_{J_{m,n}}$. %esto creemos que no, al menos con el mismo argumento
%\end{itemize}

In order to prove the first item we will prove that $\langle x_k, v \rangle=0$ for all $v$ eigenvector that come from a root of $\pi_{J_{m,n}}$. Remember that any eigenvector $v$ can be expressed as $Pw$ with $w$ eigenvector of $A(J_{m,n}/{\pi})$ and $P$ the characteristic matrix of $\pi$, then, it will be enough to prove that  $\langle x_k, P_i \rangle=0$ for all $P_i$ column of $P$.

By Proposition~\ref{prop:common-no-multiple} we have that $x_k=\begin{pmatrix}
y_k\\ 
0
\end{pmatrix}$ with $y_k$ eigenvector of the cycle $C_n$. For convenience, we enumerate the coordinates of $x_k$ from $0$ to $n$.

We have that $\langle x_k,P_1\rangle=0$, because $P_1=\begin{pmatrix}
\textbf{0}\\ 
1
\end{pmatrix}$ is the column that corresponds to $S_1$ (the center vertex).\\

In order to compute $\langle x_k,P_2\rangle$ we shall observe that $(P_2)_i=\begin{cases} 1 &\text{if  }0\leq i\leq n-1, \; i \equiv 0 \ (\text{mod } m),\\
0 &\text{otherwise,}
\end{cases}$\\ then,

\[\langle x_k,P_2\rangle=\sum_{\tiny\begin{matrix}
i\equiv 0 \ (\text{mod} m)\\ 
0\leq i\leq n -1
\end{matrix}}\rho_k^{i}=0\]
 by Lemma \ref{lem:sumatoria}.

For columns $P_j$ with $3\leq j \leq \frac{m}{2}+2$ we have that
$(P_j)_i=\begin{cases} 1 &\text{if  }0\leq i\leq n-1 \text{ and }\\
&i \equiv j-2 \ (\text{mod } m) \text{ or } i \equiv 2-j \ (\text{mod } m),\\
0 &\text{otherwise.}
\end{cases}$\\
 
Then, 
\[\langle x_k,P_j\rangle=\sum_{\tiny\begin{matrix}
0\leq i\leq n-1\\
i\equiv j-2 (\text{mod }m)
\end{matrix}}(\rho_k)^{i}
+
\sum_{\tiny\begin{matrix}
0\leq i\leq n-1\\
i\equiv 2-j \ (\text{mod }m) 
\end{matrix}}(\rho_k)^{i}
=
\sum_{i=0}^{\frac{n}{m}-1}(\rho_k)^{j-2+im}
+
\sum_{i=0}^{\frac{n}{m}-1}(\rho_k)^{2-j+im}
=
\]
\[
\left((\rho_k)^{j-2}+(\rho_k)^{2-j}\right)
\sum_{i=0}^{\frac{n}{m}-1}(\rho_k)^{im}
=\left((\rho_k)^{j-2}+(\rho_k)^{2-j}\right)
\sum_{\scriptsize\begin{matrix} 
0\leq i\leq n-1 \\ 
i\equiv0 (\text{mod }m)
\end{matrix}} (\rho_k)^i.
\]
Given that $k$ is not a multiple of $\frac{n}{m}$ by Lemma \ref{lem:sumatoria} we have that \[\sum_{\scriptsize\begin{matrix} 
0\leq i\leq n-1 \\ 
i\equiv0 (\text{mod }m)
\end{matrix}} (\rho_k)^i=0,\] then, $\langle x_k,P_j\rangle=0$.\\

%The orthogonality argument in Section~\ref{sec:common} shows that eigenvectors  corresponding to $\lambda_k^{(n)}$ (with $k \not\equiv 0 \bmod n/m$) are orthogonal to all  lifted eigenvectors coming from $Q_{m,n}$.  Thus these eigenvalues do not arise from the quotient polynomial.

%Similarly, the eigenvalues of $C_m$ that appear in $J_{m,n}$ by interlacing  (Corollary~\ref{cor:vapsDeCmEnJmn}) cannot be roots of $\pi_{J_{m,n}}(x)$, since the corresponding $\phi(P_l,x)$ polynomials have disjoint spectra, where $P_l$ denotes the simple path.

For the second item we begin to prove case $m=4$ and then prove the general case.\\

For $m =4$ we have that $Sp(P_{1})= \left[0\right]$ which is {exactly} the set of common eigenvalues to $J_{4,n}$ and $C_4$ mentioned in Corollary \ref{cor:vapsDeCmEnJmn}. It suffices to check that $0$ is not a root of $-2\left(x^2-\frac{n}{4} \right) \phi(P_0,x)$, indeed,
\[-2\left(0^2-\frac{n}{4} \right) \phi(P_0,0)= \frac{n}{2}.\]

Finally, we obtain that \[\phi(J_{4,n},x)=\left(\prod_{\tiny\begin{matrix}
1\leq k\leq n -1\\ k \not\equiv 0 \ (\text{mod} \frac{n}{4})
\end{matrix}}
%_{
%\scriptsize \begin{matrix}
%k=1, \hdots, n-1,\\
% k \text{ not multiple of }\frac{n}{4}
%\end{matrix}}
\left(x- \lambda_k^{(n)}\right)\right) \cdot x \cdot \pi_{J_{m,n}}(x).\]

For $m \geq 6$ we have that $Sp(P_{\frac{m}{2} -1})= \left[2\cos\left(\frac{2\pi j}{m}\right): j=1, \hdots, \frac{m}{2}-1\right]$ which is {exactly} the set of common eigenvalues to $J_{m,n}$ and $C_m$ mentioned in Corollary \ref{cor:vapsDeCmEnJmn}.\\ %\textcolor{red}{hay que explicar porqué 2 y -2 no son vaps de los que están en $J_{m,n}$}.\\

Let us show that the eigenvalues of $P_{\frac{m}{2} -1}$ are not eigenvalues of $P_{\frac{m}{2} -2}$, and so, they are not roots of $\pi_{J_{m,n}}$.\\
Indeed, \[Sp(P_{\frac{m}{2}-2})=\left[2\cos\left(\frac{2\pi j}{m-2}\right): j=1, \hdots, \frac{m}{2}-2 \right]\] and $2 \cos\left(\frac{2\pi j}{m}\right)=2 \cos\left(\frac{2\pi i}{m-2}\right)$ for some $i=1, \hdots, \frac{m}{2}-1$, $j=1, \hdots, \frac{m}{2}-2$ if, and only if, $j-i=\frac{2}{m}j \in \mathbb{Z}$, then, $j$ should be a multiple of $\frac{m}{2}$ which is a contradiction given the rank where $j$ varies.\\

Since $J_{m,n}$ has order $n+1$ and the previous two families contribute 
$n-1 - (m-2)$ eigenvalues, exactly $\frac{m}{2}+2$ eigenvalues remain.  
These correspond to the degree of $\pi_{J_{m,n}}(x)$, proving the claim.
\end{proof}

Combining all components yields the following factorization.

\begin{cor}\label{cor:factor-Jmn}
Let $m\ge 4$ be even. Then
\[
\phi(J_{m,n},x)
=
\left(
\prod_{\substack{1\le k\le n-1\\ k\not\equiv 0 \,(\mathrm{mod}\, n/m)}} 
(x-\lambda_k^{(n)})
\right)
\left(
\prod_{i=1}^{\frac{m-2}{2}}
(x-\lambda_i^{(m)})
\right)
\pi_{J_{m,n}}(x).
\]
Equivalently, using $\phi(C_n,x)/\phi(C_m,x)$,
\[
\phi(J_{m,n},x)
=
\frac{\phi(C_n,x)}{\phi(C_m,x)}
\left(
\prod_{i=1}^{\frac{m-2}{2}}
(x-\lambda_i^{(m)})
\right)
\pi_{J_{m,n}}(x).
\]

\end{cor}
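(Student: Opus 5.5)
The plan is to assemble the factorization from three pieces of the spectrum already identified, and then close the argument by counting degrees. The first piece comes from Proposition~\ref{prop:common-no-multiple}. The vectors $x_k$, for $1\le k\le n-1$ with $k\not\equiv 0 \pmod{n/m}$, are eigenvectors of $A(J_{m,n})$ with eigenvalues $\lambda_k^{(n)}$. They are linearly independent, being extensions by a zero coordinate of distinct Fourier vectors $y_k$. This gives $n-m$ independent eigenvectors, so the first product divides $\phi(J_{m,n},x)$ with full multiplicity.

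The second piece is the eigenvalues $\lambda_i^{(m)}$, $1\le i\le \frac{m-2}{2}$. Corollary~\ref{cor:vapsDeCmEnJmn} shows they are eigenvalues of $J_{m,n}$, but interlacing only guarantees each appears at least once. In $C_n$ each such value has multiplicity $2$, coming from $k=in/m$ and $k=n-in/m$, and both of these indices are excluded from the first product. So I need one eigenvector of $J_{m,n}$ for $\lambda_i^{(m)}$ that is independent of the $x_k$ and of the lifted quotient eigenvectors. I would take the antisymmetric combination $z_i=(y_{in/m}-y_{n-in/m})/2i$, whose $j$-th coordinate is $\sin(2\pi ij/m)$, extended by $0$ at the center. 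It is an eigenvector of $A(C_n)$ for $\lambda_i^{(m)}$. It vanishes on every spoke vertex ($j\equiv 0 \pmod m$), so the center row gives $0$ and $z_i$ is an eigenvector of $J_{m,n}$. It is also orthogonal to every column $P_j$ of the characteristic matrix of $\pi$, because $\pi$ is symmetric under the reflection $j\mapsto -j$ and $z_i$ is odd under it. Finally, it is orthogonal to all $x_k$ with $k\not\equiv 0\pmod{n/m}$, by orthogonality of distinct Fourier vectors. This yields $\frac{m-2}{2}$ further independent eigenvectors.

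The third piece is the lifts $Pw$ of the $\frac m2+2$ eigenvectors $w$ of $Q_{m,n}$ from Proposition~\ref{prop:equitable}. They span the space of vectors constant on the classes of $\pi$. The proof of Proposition~\ref{prop:remaining-eigenvalues} shows this space is orthogonal to all $x_k$, and the argument above shows it is orthogonal to all $z_i$. To get the right multiplicities I would note that $A(J_{m,n})$ is symmetric and preserves the span of the columns of $P$ (this is what equitability means), so its restriction there has characteristic polynomial $\pi_{J_{m,n}}(x)$. The space is also invariant under $A$.

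Decomposing $\mathbb{R}^{n+1}$ into three mutually orthogonal $A$-invariant subspaces of dimensions $n-m$, $\frac{m-2}{2}$ and $\frac m2+2$, these dimensions sum to $n+1$. Hence $\phi(J_{m,n},x)$ is the product of the three characteristic polynomials, which is the first identity. The second form follows because $\phi(C_n,x)=\prod_{k=0}^{n-1}(x-\lambda_k^{(n)})$, and the factors with $k=in/m$ are exactly $\phi(C_m,x)=\prod_{i=0}^{m-1}(x-\lambda_i^{(m)})$.

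The main obstacle I expect is the multiplicity bookkeeping for the $C_m$ eigenvalues. The earlier interlacing and root-comparison arguments do not rule out a coincidence between some $\lambda_i^{(m)}$ and a root of $\pi_{J_{m,n}}$, so a naive count could overcount or undercount. The orthogonal invariant-subspace decomposition sidesteps this entirely, which is why I would use it rather than a root-by-root count. The delicate check is that the odd combination $z_i$ is genuinely orthogonal to the columns $P_j$, which reduces to the reflection symmetry of the classes $S_j$ about the spoke vertices.
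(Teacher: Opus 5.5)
Your argument is correct, but it handles the eigenvalues $\lambda_i^{(m)}$, $1\le i\le\frac{m-2}{2}$, differently from the paper.

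The paper, through Proposition~\ref{prop:remaining-eigenvalues}, obtains each of them only as \emph{some} eigenvalue of $J_{m,n}$ via interlacing (Corollary~\ref{cor:vapsDeCmEnJmn}). It then excludes them from the roots of $\pi_{J_{m,n}}$ using the explicit formula of Proposition~\ref{prop:quotient-polynomial}. At a root $\mu$ of $\phi(P_{\frac m2-1},x)$ one has $\pi_{J_{m,n}}(\mu)=\frac{2n}{m}\,\phi(P_{\frac m2-2},\mu)\neq 0$, since consecutive path polynomials share no root; for $m=4$ one has $\pi_{J_{4,n}}(0)=\frac n2$. Together with the orthogonality of the $x_k$ to the columns of $P$, a count of $n-m+\frac{m-2}{2}+\frac m2+2=n+1$ eigenvalues then gives the factorization. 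So the coincidence you worry about is in fact ruled out in the paper. It is ruled out only by relying on the explicit form of $\pi_{J_{m,n}}$, whose derivation is just sketched.

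Your route needs only $AP=PQ_{m,n}$ and $\pi_{J_{m,n}}(x)=\det(xI-Q_{m,n})$. The ingredients are the odd vectors $z_i$, orthogonal to the class indicators by the reflection $j\mapsto -j$, and the orthogonal decomposition into $A$-invariant subspaces. Multiplicities come for free, and no coincidence has to be excluded. The argument also carries over essentially verbatim to odd $m$: there are $\frac{m-1}{2}$ odd vectors and $\frac{m+3}{2}$ classes, again totalling $n+1$ with the $n-m$ vectors $x_k$. This bears on the paper's Problem~3 and on the remark in Section~\ref{subsec:odd}.

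In exchange, the paper's route yields slightly more information: each $\lambda_i^{(m)}$ is a simple eigenvalue of $J_{m,n}$.
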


\subsection{The case of odd {$m$}}
\label{subsec:odd}
The orthogonality argument used in Proposition~\ref{prop:remaining-eigenvalues} does not depend on the parity of $m$, 
and therefore the roots of $\pi_{J_{m,n}}(x)$ still correspond to the missing eigenvalues of $J_{m,n}$ when $m$ is odd.  
However, unlike the even case, the expressions for $\pi_{J_{m,n}}(x)$ cannot be rewritten cleanly 
in terms of $\phi(C_m,x)$ or path polynomials indexed by $m/2$; the underlying symmetry breaks when $m$ is odd.  
Thus a closed factorization analogous to Corollary~\ref{cor:factor-Jmn} is not available in the odd case.

\section{Spectrum and energy of $J_{2,n}$, $J_{4,n}$ and $J_{6,n}$}

In this section we determine explicitly the spectrum and the energy of the Jahangir graphs
$J_{2,n}$, $J_{4,n}$ and $J_{6,n}$.  
In each case, the eigenvalues that $J_{m,n}$ inherits from the cycle $C_n$ are described by
Section~\ref{sec:common}, while the remaining eigenvalues are obtained as the roots of the
quotient polynomial $\pi_{J_{m,n}}(x)$ from Section~\ref{sec:factorization}.

\subsection{The graph $J_{2,n}$}

From Section~\ref{subsec:factor-even} we have
\[
\phi(J_{2,n},x)
=
\frac{\phi(C_n,x)}{(x-2)(x+2)}\,\pi_{J_{2,n}}(x),
\]
and the roots of $\pi_{J_{2,n}}(x)$ are
\[
\left\{\sqrt{\tfrac{n}{2}+4},\,0,\,-\sqrt{\tfrac{n}{2}+4}\right\}.
\]
Thus we obtain the following description.

\begin{teo}\label{teo:J2n}
The spectrum of $J_{2,n}$ is
\[
Sp(J_{2,n})
=
Sp(C_n)-[2,-2]
\;\sqcup\;
\Bigl[\sqrt{\tfrac{n}{2}+4},\,0,\,-\sqrt{\tfrac{n}{2}+4}\Bigr].
\]

The energy of $J_{2,n}$ is
\[
\mathcal{E}(J_{2,n})
=
\mathcal{E}(C_n) - 4 + \sqrt{2n+16}.
\]
\end{teo}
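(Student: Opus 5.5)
The plan is to read off the spectrum from the factorization of $\phi(J_{2,n},x)$ established in Section~\ref{subsec:factor-even}, and then sum absolute values. First I verify the ingredients of that factorization. By Remark~\ref{obs:mIgualDos} (which rests on Proposition~\ref{prop:common-no-multiple} and Lemma~\ref{lem:sumatoria}), every $\lambda_k^{(n)}$ with $k\neq 0,\tfrac n2$ is an eigenvalue of $J_{2,n}$ with eigenvector $x_k=(y_k,0)$. These are exactly the eigenvalues of $C_n$ except $2$ (from $k=0$) and $-2$ (from $k=n/2$, since $m=2\mid n$ makes $n$ even), giving $n-2$ eigenvalues with multiplicity. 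By Proposition~\ref{prop:equitable} applied to the three-class partition, the roots $\pm\sqrt{n/2+4},\,0$ of $\pi_{J_{2,n}}(x)=x^3-(4+\tfrac n2)x$ are also eigenvalues.

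The one delicate point, and the main obstacle, is that the two families must be counted without overlap, so that together they give all $n+1$ eigenvalues with correct multiplicities. This matters because $0$ may already be an eigenvalue of $C_n$, namely when $4\mid n$. I will reuse the orthogonality argument from the proof of Proposition~\ref{prop:remaining-eigenvalues}, which works verbatim for $m=2$. The lifted eigenvectors $Pw$ lie in the span of the characteristic vectors $P_1,P_2,P_3$ of the classes. We have $\langle x_k,P_1\rangle=0$, and $\langle x_k,P_2\rangle=0$ by Lemma~\ref{lem:sumatoria}. For $P_3$, which is the indicator of the odd positions, $\langle x_k,P_3\rangle=\rho_k\sum_{j}\rho_k^{2j}=0$ by the same lemma. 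Since $A(J_{2,n})$ is symmetric and the families are mutually orthogonal, the $n-2$ vectors $x_k$ and the $3$ lifted vectors together span $\mathbb{C}^{n+1}$. This gives the multiset equality
\[
Sp(J_{2,n})=Sp(C_n)-[2,-2]\;\sqcup\;\Bigl[\sqrt{\tfrac n2+4},\,0,\,-\sqrt{\tfrac n2+4}\Bigr].
\]
The lifted vectors are linearly independent because $P$ has full column rank and the three quotient eigenvalues are distinct. This also confirms the stated factorization $\phi(J_{2,n},x)=\phi(C_n,x)\pi_{J_{2,n}}(x)/((x-2)(x+2))$.

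For the energy, sum absolute values over the multiset. The inherited part contributes $\mathcal{E}(C_n)-|2|-|-2|=\mathcal{E}(C_n)-4$. The quotient part contributes $2\sqrt{n/2+4}+0=\sqrt{4(n/2+4)}=\sqrt{2n+16}$. Adding these gives $\mathcal{E}(J_{2,n})=\mathcal{E}(C_n)-4+\sqrt{2n+16}$. If desired, substituting the closed form of $\mathcal{E}(C_n)$ for the two even cases $n\equiv0,2\pmod 4$ gives explicit expressions.
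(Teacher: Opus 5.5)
Your proposal is correct and follows the paper's route: read off the spectrum from the factorization $\phi(J_{2,n},x)=\phi(C_n,x)\,\pi_{J_{2,n}}(x)/\bigl((x-2)(x+2)\bigr)$ and then sum absolute values. Your extra orthogonality check shows that each $x_k$ is orthogonal to $P_1,P_2,P_3$, so the $n-2$ inherited and $3$ lifted eigenvectors form a basis; this makes rigorous the multiplicity bookkeeping that the paper only asserts, in particular the triple eigenvalue $0$ when $4\mid n$.
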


\begin{proof}
The factor $\phi(C_n,x)/\bigl((x-2)(x+2)\bigr)$ removes the eigenvalues $2$ and $-2$ from $Sp(C_n)$, 
and the factor $\pi_{J_{2,n}}(x)$ contributes the three eigenvalues
$\sqrt{\tfrac{n}{2}+4}$, $0$ and $-\sqrt{\tfrac{n}{2}+4}$.  
This gives the claimed spectral decomposition.

For the energy we use
\[
\mathcal{E}(J_{2,n})
=
\mathcal{E}(C_n)
-
(|2|+|-2|)
+
\Bigl|\sqrt{\tfrac{n}{2}+4}\Bigr|
+
|0|
+
\Bigl|-\sqrt{\tfrac{n}{2}+4}\Bigr|
=
\mathcal{E}(C_n) - 4 + 2\sqrt{\tfrac{n}{2}+4},
\]
and
$2\sqrt{\tfrac{n}{2}+4} = \sqrt{2n+16}$.
\end{proof}

\subsection{The graph $J_{4,n}$}

We now turn to the case $m=4$.  
Here the quotient polynomial $\pi_{J_{4,n}}(x)$ is quartic and its roots can be expressed in closed form.

\begin{teo}\label{teo:J4n}
The spectrum of $J_{4,n}$ is
\[
Sp(J_{4,n})
=
Sp(C_n)-[2,0,-2]
\;\sqcup\;
[\theta,-\theta,\rho,-\rho],
\]
where
\[
\rho=\sqrt{\frac{n}{8}+2+\sqrt{\frac{n^2}{64}+4}}
\qquad\text{and}\qquad
\theta=\sqrt{\frac{n}{8}+2-\sqrt{\frac{n^2}{64}+4}}.
\]

The energy of $J_{4,n}$ is
\[
\mathcal{E}(J_{4,n})
=
\mathcal{E}(C_n) - 4 + 2\rho + 2\theta.
\]
\end{teo}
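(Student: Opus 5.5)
We use the factorization of Corollary~\ref{cor:factor-Jmn} with $m=4$, reading off the spectrum from it and computing the quartic $\pi_{J_{4,n}}(x)$ directly from the $4\times4$ quotient matrix $Q_{4,n}$ of Proposition~\ref{prop:quotient-even}.

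First, the spectrum. With $m=4$ we have $Sp(C_4)=[2,0,0,-2]$, and the middle product in Corollary~\ref{cor:factor-Jmn} is the single factor $x-\lambda_1^{(4)}=x$, since $\lambda_1^{(4)}=2\cos(\pi/2)=0$. So
\[
\phi(J_{4,n},x)=\frac{\phi(C_n,x)}{\phi(C_4,x)}\cdot x\cdot \pi_{J_{4,n}}(x).
\]
Dividing by $\phi(C_4,x)$ removes $2,0,0,-2$ from $Sp(C_n)$, and the factor $x$ restores one copy of $0$. The net effect on $Sp(C_n)$ is therefore the multiset difference $Sp(C_n)-[2,0,-2]$, to which we add the roots of $\pi_{J_{4,n}}$. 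This matches the first part of the statement once those roots are shown to be $\pm\rho,\pm\theta$.

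Second, the quartic. Here
\[
Q_{4,n}=\begin{pmatrix}0&\frac n4&0&0\\ 1&0&2&0\\ 0&1&0&1\\ 0&0&2&0\end{pmatrix}.
\]
This is tridiagonal with zero diagonal. Its characteristic polynomial depends only on the products of symmetric off-diagonal pairs, which are $b_1=\frac n4$, $b_2=2$ and $b_3=2$. The continuant formula, or equivalently expanding the form given in Proposition~\ref{prop:quotient-polynomial} with $\phi(P_1,x)=x$ and $\phi(P_0,x)=1$, then gives
\[
\pi_{J_{4,n}}(x)=x^4-(b_1+b_2+b_3)x^2+b_1b_3=x^4-\Bigl(\tfrac n4+4\Bigr)x^2+\tfrac n2 .
\]
This is a quadratic in $x^2$. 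Its discriminant is $(\frac n4+4)^2-2n=\frac{n^2}{16}+16$, so
\[
x^2=\tfrac n8+2\pm\sqrt{\tfrac{n^2}{64}+4}.
\]
Both values are positive, because $(\frac n8+2)^2=\frac{n^2}{64}+\frac n2+4>\frac{n^2}{64}+4$. Hence the four roots are the real numbers $\pm\rho$ and $\pm\theta$, with $\rho>\theta>0$. None of them is $0$, consistent with Proposition~\ref{prop:remaining-eigenvalues}.

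Third, the energy. Starting from $\mathcal{E}(C_n)$, removing the eigenvalues $2,0,-2$ subtracts $4$, and adding $\pm\rho,\pm\theta$ contributes $2\rho+2\theta$. This gives $\mathcal{E}(J_{4,n})=\mathcal{E}(C_n)-4+2\rho+2\theta$.

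The only delicate step is the multiset bookkeeping around the eigenvalue $0$. We must check that exactly one copy of $0$ is lost relative to $C_n$. This needs $n/4\ge1$, so that $k=n/4$ indexes an eigenvalue $0$ of $C_n$ and $0$ actually belongs to $Sp(C_n)$. It also needs $0$ not to be a root of $\pi_{J_{4,n}}$, which holds since $\pi_{J_{4,n}}(0)=\frac n2\neq0$. Beyond that, the argument is the routine quartic computation.
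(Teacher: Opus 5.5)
Your proof is correct and follows the paper's route: you factor $\phi(J_{4,n},x)$ using the eigenvalues inherited from the cycle, compute $\pi_{J_{4,n}}(x)=x^4-(\frac n4+4)x^2+\frac n2$ from the quotient matrix, solve the biquadratic, and do the energy bookkeeping. Your version is slightly more careful than the paper's: invoking Corollary~\ref{cor:factor-Jmn} instead of only Remark~\ref{obs:espectroCnmenosCm} makes explicit why exactly one copy of $0$ is lost, and you verify $\theta>0$, which the paper only asserts.
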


\begin{proof}
From Proposition~\ref{prop:quotient-polynomial} we have
\[
\pi_{J_{4,n}}(x)
=
\bigl((x^2-\tfrac{n}{4})x - 2x\bigr)\phi(P_1,x)
-
2\bigl(x^2-\tfrac{n}{4}\bigr)\phi(P_0,x).
\]
Using $\phi(P_1,x)=x$ and $\phi(P_0,x)=1$, we obtain
\[
\pi_{J_{4,n}}(x)
=
x^4 - \Bigl(4+\frac{n}{4}\Bigr)x^2 + \frac{n}{2}.
\]
Setting $y=x^2$, this becomes a quadratic in $y$:
\[
y^2 - \Bigl(4+\frac{n}{4}\Bigr)y + \frac{n}{2} = 0,
\]
whose solutions are
\[
y = \frac{n}{8}+2 \pm \sqrt{\frac{n^2}{64}+4}.
\]
Thus the roots of $\pi_{J_{4,n}}(x)$ are
\[
\pm\rho,\quad \pm\theta,
\]
with $\rho,\theta>0$ as defined above.

By Remark~\ref{obs:espectroCnmenosCm}, the eigenvalues $2$, $0$ and $-2$ of $C_n$ do not 
survive in $Sp(J_{4,n})$, and are replaced by $\pm\rho,\pm\theta$.  
This yields the asserted description of the spectrum.

For the energy, we remove $2$, $0$, $-2$ from the cycle and add $\pm\rho,\pm\theta$, obtaining
\[
\mathcal{E}(J_{4,n})
=
\mathcal{E}(C_n) - (|2|+|0|+|-2|) + (|\rho|+|-\rho|+|\theta|+|-\theta|)
=
\mathcal{E}(C_n) - 4 + 2\rho + 2\theta.
\]
\end{proof}

\subsection{The graph $J_{6,n}$}

Finally we treat the case $m=6$, where the quotient polynomial splits into a linear factor and a quartic factor that can again be solved explicitly.

\begin{teo}\label{teo:J6n}
The spectrum of $J_{6,n}$ is
\[
Sp(J_{6,n})
=
Sp(C_n)-[-2,-1,1,2]
\;\sqcup\;
[-\rho,-\theta,0,\theta,\rho],
\]
where
\[
\rho
=
\sqrt{\frac{n}{12}+\frac{5}{2}+\sqrt{\frac{n^2}{144}-\frac{n}{12}+\frac{9}{4}}}
\quad\text{and}\quad
\theta
=
\sqrt{\frac{n}{12}+\frac{5}{2}-\sqrt{\frac{n^2}{144}-\frac{n}{12}+\frac{9}{4}}}.
\]

The energy of $J_{6,n}$ is
\[
\mathcal{E}(J_{6,n})
=
\mathcal{E}(C_n) - 6 + 2\rho + 2\theta.
\]
\end{teo}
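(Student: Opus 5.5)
The plan is to follow the same route as for Theorem~\ref{teo:J4n}: compute the quotient polynomial $\pi_{J_{6,n}}(x)$ explicitly, solve it, and then assemble the spectrum from Corollary~\ref{cor:factor-Jmn}.

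\emph{Step 1: the quotient polynomial.} Take $m=6$ in Proposition~\ref{prop:quotient-polynomial}, the case $m\ge 6$. Use $\phi(P_2,x)=x^2-1$ and $\phi(P_1,x)=x$, both obtained from Proposition~\ref{prop:recurrencia}. This gives
\[
\pi_{J_{6,n}}(x)=\Bigl(x^3-\bigl(\tfrac{n}{6}+4\bigr)x\Bigr)(x^2-1)+\tfrac{n}{3}\,x
= x\Bigl(x^4-\bigl(\tfrac{n}{6}+5\bigr)x^2+\tfrac{n}{2}+4\Bigr).
\]
Since the proof of Proposition~\ref{prop:quotient-polynomial} is only sketched, I would also cross-check this by expanding $\det(xI-Q_{6,n})$ directly. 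Here $Q_{6,n}$ is the $5\times 5$ tridiagonal matrix with superdiagonal $(\tfrac n6,2,1,1)$ and subdiagonal $(1,1,1,2)$. The standard three-term recurrence for tridiagonal determinants makes this a short computation. It should give the same quintic, and I expect this verification to be the only real obstacle.

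\emph{Step 2: the roots.} The linear factor contributes the eigenvalue $0$. For the quartic, set $y=x^2$ and solve $y^2-(\tfrac n6+5)y+\tfrac n2+4=0$. This gives
\[
y=\tfrac{n}{12}+\tfrac52\pm\sqrt{\bigl(\tfrac{n}{12}+\tfrac52\bigr)^2-\tfrac n2-4}
=\tfrac{n}{12}+\tfrac52\pm\sqrt{\tfrac{n^2}{144}-\tfrac{n}{12}+\tfrac94}.
\]
The discriminant equals $(\tfrac{n}{12}-\tfrac12)^2+2>0$, so the two values of $y$ are real and distinct. Their sum $\tfrac n6+5$ and their product $\tfrac n2+4$ are both positive, so both values are positive. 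Hence the quartic has the four real roots $\pm\rho,\pm\theta$ with $\rho>\theta>0$, exactly as defined in the statement.

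\emph{Step 3: assembling the spectrum and the energy.} By Corollary~\ref{cor:factor-Jmn} with $m=6$, the spectrum of $J_{6,n}$ consists of three parts:
\begin{enumerate}
\item the eigenvalues $\lambda_k^{(n)}$ with $k$ not a multiple of $n/6$;
\item the values $\lambda_1^{(6)}=1$ and $\lambda_2^{(6)}=-1$, each taken once;
\item the five roots of $\pi_{J_{6,n}}(x)$.
\end{enumerate}
The indices $k$ that are multiples of $n/6$ contribute $Sp(C_6)=[2,1,-1,-2,-1,1]$. Exactly one copy each of $1$ and $-1$ is restored in item 2, so the net multiset removed from $Sp(C_n)$ is $[-2,-1,1,2]$. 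This yields the stated spectrum.

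For the energy, subtract $|{-2}|+|{-1}|+|1|+|2|=6$ from $\mathcal{E}(C_n)$. Then add $|0|+2\rho+2\theta$, which gives $\mathcal{E}(J_{6,n})=\mathcal{E}(C_n)-6+2\rho+2\theta$.
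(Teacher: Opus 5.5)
Your proposal is correct and follows the paper's own proof. Both specialize Proposition~\ref{prop:quotient-polynomial} to $m=6$, factor out $x$, solve the biquadratic via $y=x^2$, and then assemble the spectrum and the energy. Your bookkeeping is somewhat more careful than the paper's, which cites only Remark~\ref{obs:espectroCnmenosCm}: you invoke Corollary~\ref{cor:factor-Jmn} to explain why only one copy each of $\pm1$ is removed from $Sp(C_n)$, and you check that both values of $y$ are positive. The direct tridiagonal expansion of $\det(xI-Q_{6,n})$ you propose does confirm $x^5-(\tfrac n6+5)x^3+(\tfrac n2+4)x$.
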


\begin{proof}
From Proposition~\ref{prop:quotient-polynomial} we have
\[
\pi_{J_{6,n}}(x)
=
\bigl(x^3-(\tfrac{n}{6}+4)x\bigr)\phi(P_2,x)
+
\frac{n}{3}\,\phi(P_1,x).
\]
Since $\phi(P_2,x)=x^2-1$ and $\phi(P_1,x)=x$, we obtain
\[
\pi_{J_{6,n}}(x)
=
x\Bigl(x^4 - \Bigl(5+\frac{n}{6}\Bigr)x^2 + \frac{n}{2} + 4\Bigr).
\]
Again setting $y=x^2$, the quartic factor corresponds to
\[
y^2 - \Bigl(5+\frac{n}{6}\Bigr)y + \Bigl(\frac{n}{2}+4\Bigr) = 0,
\]
whose solutions are
\[
y
=
\frac{n}{12}+\frac{5}{2}
\pm
\sqrt{\frac{n^2}{144}-\frac{n}{12}+\frac{9}{4}}.
\]
Thus the roots of $\pi_{J_{6,n}}(x)$ are
\[
0,\quad \pm\rho,\quad \pm\theta,
\]
with $\rho,\theta>0$ as stated.

By Remark~\ref{obs:espectroCnmenosCm}, the eigenvalues $-2,-1,1,2$ of $C_n$ are replaced by
$0,\pm\rho,\pm\theta$ in $Sp(J_{6,n})$, which gives the claimed spectral decomposition.

For the energy, we remove $-2,-1,1,2$ and add $0,\pm\rho,\pm\theta$:
\[
\mathcal{E}(J_{6,n})
=
\mathcal{E}(C_n)
-
\bigl(|-2|+|-1|+|1|+|2|\bigr)
+
\bigl(|0|+|\rho|+|-\rho|+|\theta|+|-\theta|\bigr)
=
\mathcal{E}(C_n) - 6 + 2\rho + 2\theta.
\]
\end{proof}

\section{Comparison of energies}

Once closed formulas for the energies of $C_n$, $W_n$, and the Jahangir graphs $J_{m,n}$ (with $m=2,4,6$) are established, it is natural to compare them. 
Since $C_n$ is an induced subgraph of every $J_{m,n}$, Theorem~4.19 in \cite{Li} implies
\[
\mathcal{E}(C_n)\le \mathcal{E}(J_{m,n})
\qquad\text{for all }m\mid n.
\]
We now compare the energies of the three families studied in detail.

\begin{prop}\label{prop:EJ6nEJ4n}
Let $n$ be a positive integer divisible by $12$. Then
\[
\mathcal{E}(J_{6,n})\le \mathcal{E}(J_{4,n}).
\]
\end{prop}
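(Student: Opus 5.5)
The plan is to reduce the inequality to a comparison of two explicit algebraic quantities, using the closed formulas of Theorems~\ref{teo:J4n} and~\ref{teo:J6n}. The hypothesis $12\mid n$ is needed only so that both $J_{4,n}$ and $J_{6,n}$ are defined; the inequality itself will be shown for every admissible $n$. Write $\rho_4,\theta_4$ and $\rho_6,\theta_6$ for the quantities $\rho,\theta$ in the two theorems. Since both energies have the form $\mathcal{E}(C_n)$ plus a correction term, the term $\mathcal{E}(C_n)$ cancels. The claim is therefore equivalent to
\[
-6+2(\rho_6+\theta_6)\le -4+2(\rho_4+\theta_4),
\qquad\text{i.e.}\qquad
\rho_6+\theta_6\le 1+\rho_4+\theta_4 .
\]

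Rather than handling the nested radicals directly, I will use Vieta's formulas for the quadratics in $y=x^2$ from the proofs of the two theorems. For $m=4$ the roots $\rho_4^2,\theta_4^2$ have sum $4+\tfrac n4$ and product $\tfrac n2$. Since $\rho_4,\theta_4>0$, this gives
\[
(\rho_4+\theta_4)^2=\rho_4^2+\theta_4^2+2\rho_4\theta_4=4+\tfrac n4+2\sqrt{\tfrac n2}.
\]
For $m=6$ the sum is $5+\tfrac n6$ and the product is $\tfrac n2+4$, so
\[
(\rho_6+\theta_6)^2=5+\tfrac n6+2\sqrt{\tfrac n2+4}.
\]
Both sides of the target inequality are positive, so it suffices to compare squares:
\[
5+\tfrac n6+2\sqrt{\tfrac n2+4}\;\le\; 1+4+\tfrac n4+2\sqrt{\tfrac n2}+2\sqrt{4+\tfrac n4+2\sqrt{\tfrac n2}}.
\]

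The main step is to verify this last inequality. I will do it with two elementary estimates. First, subadditivity of the square root gives $\sqrt{\tfrac n2+4}\le\sqrt{\tfrac n2}+2$, so the left side is at most $9+\tfrac n6+2\sqrt{\tfrac n2}$. Second, the final radical on the right is at least $\sqrt4=2$, so the right side is at least $9+\tfrac n4+2\sqrt{\tfrac n2}$. Since $\tfrac n6\le\tfrac n4$, the squared inequality follows, which proves the proposition.

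The only potential obstacle is making sure the crude bounds do not lose too much. Here there is a lot of slack, because the term $\tfrac n4-\tfrac n6$ grows with $n$. If the margin turned out tighter than expected, I would instead keep the term $2\sqrt{4+\tfrac n4+2\sqrt{n/2}}$ exactly and check the few small admissible values $n=12,24$ numerically.
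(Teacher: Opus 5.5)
Your proof is correct, and it departs from the paper's at exactly the point where the paper goes wrong. Both arguments reduce the claim to $\rho_6+\theta_6\le 1+\rho_4+\theta_4$. The paper then discards the $1$ and tries to prove the stronger inequality $\rho_6+\theta_6\le\rho_4+\theta_4$, asserting it is equivalent to $12-n\le\sqrt{288n}-\sqrt{96n-144}$.

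That equivalence is wrong. Your Vieta identities give $(\rho_6+\theta_6)^2=5+\frac{n}{6}+2\sqrt{\frac{n}{2}+4}$ and $(\rho_4+\theta_4)^2=4+\frac{n}{4}+2\sqrt{\frac{n}{2}}$. From these, the correct condition is $12-n\le\sqrt{288n}-\sqrt{288n+2304}$, whose right-hand side is negative. The stronger inequality in fact fails for $n=12$ and $n=24$. For $n=12$ the two quadratics have roots $6,1$ and $5,2$, so
\[
\rho_4+\theta_4=\sqrt{6}+1\approx 3.45<3.65\approx\sqrt{5}+\sqrt{2}=\rho_6+\theta_6 .
\]
Among multiples of $12$, the stronger inequality holds only from $n=36$ on.

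So the slack of $1$ you keep is essential. Your two estimates, $\sqrt{\frac{n}{2}+4}\le\sqrt{\frac{n}{2}}+2$ and $2(\rho_4+\theta_4)\ge 4$, close the squared inequality for every admissible $n$, so the fallback numerical check of $n=12,24$ you mention is not needed. Working with the product $\rho\theta$ via Vieta, rather than with the nested radicals, is what makes the comparison transparent and avoids the algebra slip in the paper.
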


\begin{proof}
By Theorems~\ref{teo:J4n} and \ref{teo:J6n},
\[
\mathcal{E}(J_{4,n})
=
\mathcal{E}(C_n)-4+2\rho'+2\theta'
=
\mathcal{E}(J_{6,n})+2-2\rho-2\theta+2\rho'+2\theta',
\]
where
\[
\rho=\sqrt{\frac{n}{12}+\frac{5}{2}+\sqrt{\frac{n^2}{144}-\frac{n}{12}+\frac94}}, 
\qquad
\theta=\sqrt{\frac{n}{12}+\frac{5}{2}-\sqrt{\frac{n^2}{144}-\frac{n}{12}+\frac94}},
\]
\[
\rho'=\sqrt{\frac{n}{8}+2+\sqrt{\frac{n^2}{64}+4}}, 
\qquad
\theta'=\sqrt{\frac{n}{8}+2-\sqrt{\frac{n^2}{64}+4}}.
\]

Thus the claim is equivalent to
\[
  \rho+\theta \;\le\; 1+\rho'+\theta'.
\]

%\textcolor{green}{ésto es lo que iría, chat gpt me dio ok con las cuentas}\\
%\textcolor{blue}{Squaring both terms we obtain the following inequality:\\
%$\frac{n}{4}+5+2\sqrt{\frac{n}{2}}+2\sqrt{\frac{n}{8}+2+\sqrt{\frac{n^2}{64}+4}}+2\sqrt{\frac{n}{8}+2-\sqrt{\frac{n^2}{64}+4}}\geq\frac{n}{6}+5+2\sqrt{\frac{n}{2}+4}$\\
%Since $\frac{n}{4}\geq\frac{n}{6}, 2\sqrt{\frac{n}{8}+2-\sqrt{\frac{n^2}{64}+4}}\geq0$ for all $n\in\mathbb{N}$ and $2\sqrt{\frac{n}{2}}+2\sqrt{\frac{n}{8}+2+\sqrt{\frac{n^2}{64}+4}}\geq2\sqrt{\frac{n}{2}+4}$ for all $n\geq12$, we can assume the inequality helds for any $n$ positive integer divisible by 12.}\\

%\textcolor{green}{ésto es lo que estaba antes que chat gpt me dijo que no se cumplía:}\\
We will actually show that $\rho+\theta \le \rho'+\theta',$ a direct computation shows that 
\[
\rho+\theta \le \rho'+\theta'
\quad 
\Longleftrightarrow \quad 12- n\le \sqrt{288n}-\sqrt{96n-144}.
\]

The right–hand side is nonnegative for all $n$, hence the inequality holds for every $n$ satisfying the hypothesis.

%\Longleftrightarrow \quad \frac{n}{6} + 5 +\sqrt{\frac{2n}{3}+1} \le\frac{n}{4} + 4 +\sqrt{2n}
\end{proof}

\begin{prop}
Let $n\ge 32$ be a positive integer divisible by $4$. Then
\[
\mathcal{E}(J_{4,n})\le \mathcal{E}(J_{2,n}).
\]
\end{prop}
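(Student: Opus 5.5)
By Theorems~\ref{teo:J2n} and \ref{teo:J4n}, both energies differ from $\mathcal{E}(C_n)$ by the same constant $-4$. Writing $\rho,\theta$ for the quantities of Theorem~\ref{teo:J4n}, the claim is therefore equivalent to
\[
2\rho+2\theta\;\le\;\sqrt{2n+16},
\qquad\text{that is,}\qquad
\rho+\theta\le\sqrt{\tfrac n2+4}.
\]
Both sides are nonnegative, so I will square. I will avoid expanding the nested radicals directly. Instead I will use that $\rho^2$ and $\theta^2$ are the two roots of the quadratic
\[
y^2-\bigl(4+\tfrac n4\bigr)y+\tfrac n2=0
\]
from the proof of Theorem~\ref{teo:J4n}. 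By Vieta's formulas this gives
\[
\rho^2+\theta^2=\tfrac n4+4,
\qquad
\rho\theta=\sqrt{\tfrac n2},
\]
where the second uses $\rho,\theta\ge 0$. Hence
\[
(\rho+\theta)^2=\tfrac n4+4+2\sqrt{\tfrac n2}=\tfrac n4+4+\sqrt{2n}.
\]

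The inequality then reduces to
\[
\tfrac n4+4+\sqrt{2n}\le\tfrac n2+4,
\qquad\text{that is,}\qquad
\sqrt{2n}\le\tfrac n4.
\]
Squaring once more (both sides are nonnegative) gives $2n\le n^2/16$, which for $n>0$ is exactly $n\ge 32$. So the hypothesis $n\ge 32$ is precisely what makes the chain of equivalences close, and the divisibility by $4$ is only needed so that $J_{4,n}$ is defined.

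The only point needing care is justifying $\rho\theta=\sqrt{n/2}$ rather than computing it from the nested-radical formulas. This requires checking that $\theta$ is real, i.e. $\tfrac n8+2\ge\sqrt{\tfrac{n^2}{64}+4}$. That holds because squaring gives $\tfrac{n^2}{64}+\tfrac n2+4\ge\tfrac{n^2}{64}+4$. I do not expect any real obstacle: every step is an equivalence between nonnegative quantities. As a byproduct, the argument shows the bound $n\ge 32$ is sharp.
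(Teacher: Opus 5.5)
Your proof is correct and follows the same route as the paper: you reduce the claim to $2\rho+2\theta\le\sqrt{2n+16}$ and square until you reach $32n\le n^2$. Your use of Vieta's formulas ($\rho^2+\theta^2=\tfrac n4+4$, $\rho\theta=\sqrt{n/2}$) makes explicit the computation the paper compresses into ``by squaring,'' and your observation that equality holds at $n=32$ is consistent with Remark~\ref{obs:falla}.
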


\begin{proof}
From Theorems~\ref{teo:J2n} and \ref{teo:J4n},
\[
\mathcal{E}(J_{2,n})
=
\mathcal{E}(C_n)-4+\sqrt{2n+16}
=
\mathcal{E}(J_{4,n})+\sqrt{2n+16}-2\rho-2\theta,
\]
where
\[
\rho=\sqrt{\frac{n}{8}+2+\sqrt{\frac{n^2}{64}+4}},
\qquad
\theta=\sqrt{\frac{n}{8}+2-\sqrt{\frac{n^2}{64}+4}}.
\]

Thus it suffices to prove
\[
2\rho+2\theta \le \sqrt{2n+16}.
\]

By squaring, this is equivalent to
\[
32n \le n^2,
\]
which holds for all $n\ge 32$.
\end{proof}

\begin{obs}\label{obs:falla}
The preceding argument shows that for multiples of $4$ with $n<32$ the inequality reverses, i.e.,
\[
\mathcal{E}(J_{2,n})\le \mathcal{E}(J_{4,n}).
\]
\end{obs}

\begin{prop}
Let $n$ be even. Then
\[
\mathcal{E}(J_{2,n})\le \mathcal{E}(W_n).
\]
\end{prop}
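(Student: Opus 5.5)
We compare the two closed formulas already available, so that the inequality reduces to an elementary numerical one. By Theorem~\ref{teo:J2n},
\[
\mathcal{E}(J_{2,n})=\mathcal{E}(C_n)-4+\sqrt{2n+16}.
\]
For $n$ even, $n\ge 3$, Proposition~\ref{prop:espectros} and the Remark preceding Proposition~\ref{prop:energiaCicloRueda} give
\[
\mathcal{E}(W_n)=\mathcal{E}(C_n)-2+\bigl(1+\sqrt{n+1}\bigr)+\bigl|1-\sqrt{n+1}\bigr|=\mathcal{E}(C_n)+2\sqrt{n+1}-2,
\]
since $\sqrt{n+1}\ge 1$. The common term $\mathcal{E}(C_n)$ cancels, so the claim is equivalent to
\[
\sqrt{2n+16}\le 2\sqrt{n+1}+2 .
\]

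Both sides are positive, so we may square. The left side becomes $2n+16$. The right side becomes $4(n+1)+8\sqrt{n+1}+4=4n+8+8\sqrt{n+1}$. The inequality is therefore equivalent to
\[
0\le 2n-8+8\sqrt{n+1}.
\]
This clearly holds: for $n\ge 4$ we have $2n-8\ge 0$. In fact it holds for every $n\ge 1$, because $8\sqrt{n+1}\ge 8\sqrt2>8$. This finishes the argument for every even $n$. No divisibility condition beyond $2\mid n$ is needed, since that is exactly what makes $J_{2,n}$ defined.

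There is no real obstacle here. The only point needing care is the absolute value $|1-\sqrt{n+1}|=\sqrt{n+1}-1$, and that the eigenvalue removed from $Sp(C_n)$ for $W_n$ is only $2$. In contrast, for $J_{2,n}$ both $\pm2$ are removed, which is why the constants $-2$ and $-4$ differ. One could also observe that the inequality is strict. Since Theorem~4.19 of \cite{Li} concerns induced subgraphs and $J_{2,n}$ is not an induced subgraph of $W_n$ in that sense, the direct computation above is the right route.
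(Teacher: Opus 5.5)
Your proposal is correct and matches the paper's proof: both cancel $\mathcal{E}(C_n)$ using Theorem~\ref{teo:J2n} and Proposition~\ref{prop:energiaCicloRueda}, reduce the claim to $\sqrt{2n+16}\le 2+2\sqrt{n+1}$, and square to get $8\le 2n+8\sqrt{n+1}$, which holds for all $n\ge 1$.
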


\begin{proof}
From Proposition~\ref{prop:energiaCicloRueda} and Theorem~\ref{teo:J2n},
\[
\mathcal{E}(W_n)
=
\mathcal{E}(C_n)+2(\sqrt{n+1}-1)
=
\mathcal{E}(J_{2,n})+2-\sqrt{2n+16}+2\sqrt{n+1}.
\]
Thus we need to show
\[
2-\sqrt{2n+16}+2\sqrt{n+1}\ge 0.
\]

Equivalently,
\[
\sqrt{2n+16}\le 2+2\sqrt{n+1}
\quad \Longleftrightarrow \quad
8\le 8\sqrt{n+1}+2n,
\]
which holds for all $n\ge 1$.
\end{proof}

\begin{cor}\label{energycomp}
Let $n\ge 36$ be a multiple of $12$. Then
\[
\mathcal{E}(C_n)\le \mathcal{E}(J_{6,n})\le \mathcal{E}(J_{4,n})\le
\mathcal{E}(J_{2,n})\le \mathcal{E}(W_n).
\]
\end{cor}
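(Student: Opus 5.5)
The corollary follows by chaining the four comparisons established above; the only real work is checking that the hypotheses of each comparison hold simultaneously. Let $n\ge 36$ be a multiple of $12$. Then $n$ is divisible by $2$, $4$ and $6$, so all three graphs $J_{2,n}$, $J_{4,n}$ and $J_{6,n}$ are defined, and Theorems~\ref{teo:J2n}, \ref{teo:J4n} and \ref{teo:J6n} give their energies.

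I will establish the four inequalities from left to right.

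\begin{enumerate}
\item For $\mathcal{E}(C_n)\le\mathcal{E}(J_{6,n})$, I invoke the induced-subgraph fact quoted at the start of this section (Theorem~4.19 in \cite{Li}). As a cross-check, Theorem~\ref{teo:J6n} reduces this to $2\rho+2\theta\ge 6$, which should follow from $\rho\ge\sqrt{n/12+5/2}\ge 3$ once $n\ge 36$.
\item For $\mathcal{E}(J_{6,n})\le\mathcal{E}(J_{4,n})$, I apply Proposition~\ref{prop:EJ6nEJ4n}, whose only hypothesis is $12\mid n$.
\item For $\mathcal{E}(J_{4,n})\le\mathcal{E}(J_{2,n})$, I apply the proposition requiring $4\mid n$ and $n\ge 32$. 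This is exactly where the lower bound is needed: by Remark~\ref{obs:falla} the inequality fails for smaller multiples of $4$. The smallest multiple of $12$ that is at least $32$ is $36$, which explains the threshold in the statement.
\item For $\mathcal{E}(J_{2,n})\le\mathcal{E}(W_n)$, I apply the last proposition, which needs only that $n$ is even.
\end{enumerate}

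Concatenating these four inequalities by transitivity gives the chain.

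The main obstacle is not in the chaining itself but in making sure the cited comparison inequalities are sound. Specifically, I would re-verify the squaring step in the $J_{4,n}$ versus $J_{2,n}$ comparison. Using $\rho^2+\theta^2=n/4+4$ and $\rho\theta=\sqrt{n/2}$, we get
\[
4(\rho+\theta)^2 = n+16+8\sqrt{n/2},
\]
and comparing this with $2n+16$ gives $8\sqrt{n/2}\le n$, that is, $32n\le n^2$, which holds for $n\ge 32$.

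Similarly, for $J_{6,n}$ I would verify $\rho+\theta\le\rho'+\theta'$ via
\[
(\rho+\theta)^2=\tfrac{n}{6}+5+2\sqrt{\tfrac{n}{2}+4}
\quad\text{and}\quad
(\rho'+\theta')^2=\tfrac{n}{4}+4+2\sqrt{\tfrac{n}{2}},
\]
and then check that this inequality holds for $n\ge 36$.
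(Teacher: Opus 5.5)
Your proposal is correct and follows the paper's own argument: the corollary is the concatenation of the induced-subgraph bound $\mathcal{E}(C_n)\le\mathcal{E}(J_{6,n})$ with the three pairwise comparisons, and $36$ is the least multiple of $12$ with $n\ge 32$.

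Two small corrections to your side checks. First, $\sqrt{n/12+5/2}\ge 3$ holds only for $n\ge 78$ (at $n=36$ one has $\rho\approx 2.89$); however, $(\rho+\theta)^2=\tfrac{n}{6}+5+2\sqrt{\tfrac{n}{2}+4}\ge 9$ gives $\rho+\theta\ge 3$ for every $n$. Second, your re-check of $\rho+\theta\le\rho'+\theta'$ does succeed for $n\ge 36$, since $(\rho'+\theta')^2-(\rho+\theta)^2=\tfrac{n}{12}-1-2\bigl(\sqrt{\tfrac{n}{2}+4}-\sqrt{\tfrac{n}{2}}\bigr)$ is increasing in $n$ and positive at $n=36$. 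This expression is negative at $n=12,24$, contrary to what the proof of Proposition~\ref{prop:EJ6nEJ4n} asserts; there only the weaker inequality $\rho+\theta\le 1+\rho'+\theta'$ holds, but that is all the energy comparison actually needs.
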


The individual inequalities cannot be deduced by transitivity alone due to the divisibility constraints on $n$, hence each has been treated separately.

\begin{prop}
Let $n$ be a positive integer divisible by $6$. Then
\[
\mathcal{E}(J_{6,n})\le \mathcal{E}(J_{2,n}).
\]
\end{prop}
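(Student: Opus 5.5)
The plan is to work directly with the closed formulas of Theorems~\ref{teo:J2n} and \ref{teo:J6n}, in the same spirit as the earlier comparisons. Since $6\mid n$, $n$ is even, so both formulas apply simultaneously, and the term $\mathcal{E}(C_n)$ cancels:
\[
\mathcal{E}(J_{2,n})-\mathcal{E}(J_{6,n}) = 2+\sqrt{2n+16}-2\rho-2\theta,
\]
where $\rho,\theta$ are the quantities of Theorem~\ref{teo:J6n}. So the statement is equivalent to
\[
2(\rho+\theta)\le 2+\sqrt{2n+16}.
\]

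The key step is to avoid the nested radicals by using Vieta's formulas rather than the explicit expressions. The numbers $\rho^2$ and $\theta^2$ are the two roots of
\[
y^2-\Bigl(5+\tfrac{n}{6}\Bigr)y+\tfrac{n}{2}+4 .
\]
Hence $\rho^2+\theta^2=\tfrac{n}{6}+5$ and $\rho\theta=\sqrt{\tfrac{n}{2}+4}$. Before using this, I will check that both roots are real and nonnegative. The discriminant $\tfrac{n^2}{144}-\tfrac{n}{12}+\tfrac94$ is positive for all $n$, and both the sum and the product of the roots are positive, so $\rho,\theta>0$. It follows that
\[
(\rho+\theta)^2=\tfrac{n}{6}+5+2\sqrt{\tfrac{n}{2}+4}.
\]

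Both sides of the target inequality are nonnegative, so squaring is an equivalence. It becomes
\[
4\Bigl(\tfrac{n}{6}+5\Bigr)+8\sqrt{\tfrac{n}{2}+4}\;\le\;4+4\sqrt{2n+16}+2n+16 .
\]
Now I use $8\sqrt{\tfrac n2+4}=4\sqrt{2n+16}$. After this substitution the radical terms cancel exactly, and the inequality reduces to $\tfrac{2n}{3}+20\le 2n+20$, which holds for every $n\ge 0$.

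The only place I expect any care to be needed is in justifying that $\rho\theta$ equals the positive square root of the constant term, that is, that $\theta^2\ge 0$. The positivity of the product of the roots settles this. No lower bound on $n$ beyond $6\mid n$ should be required, in contrast with the comparison between $J_{4,n}$ and $J_{2,n}$.
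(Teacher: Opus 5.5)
Your proof is correct and follows the paper's route. You cancel $\mathcal{E}(C_n)$ using Theorems~\ref{teo:J2n} and \ref{teo:J6n}, reduce to $2\rho+2\theta\le 2+\sqrt{2n+16}$, and square; your Vieta computation $(\rho+\theta)^2=\tfrac{n}{6}+5+\sqrt{2n+16}$ spells out what the paper compresses into ``squaring yields $0\le \tfrac{n}{3}$'', and your final inequality $\tfrac{2n}{3}+20\le 2n+20$ is that condition multiplied by $4$.
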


\begin{proof}
From Theorems~\ref{teo:J2n} and \ref{teo:J6n},
\[
\mathcal{E}(J_{2,n})
=
\mathcal{E}(J_{6,n})+2-2\rho-2\theta+\sqrt{2n+16},
\]
with $\rho,\theta$ as in Theorem~\ref{teo:J6n}.  

Thus it suffices to show
\[
2\rho+2\theta \le \sqrt{2n+16}+2.
\]

Squaring yields the equivalent condition
\[
0\le \tfrac{n}{3},
\]
which holds for all $n\ge 1$.
\end{proof}

\begin{cor}
If $n$ is a positive integer divisible by $6$, then
\[
\mathcal{E}(J_{6,n})\le \mathcal{E}(W_n).
\]
\end{cor}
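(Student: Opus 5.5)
Chain the two inequalities already available for $n$ divisible by $6$: $\mathcal{E}(J_{6,n})\le\mathcal{E}(J_{2,n})$ from the preceding proposition, and $\mathcal{E}(J_{2,n})\le\mathcal{E}(W_n)$ from the proposition valid for all even $n$. Since $6\mid n$ forces $n$ to be even, both hypotheses hold simultaneously, and transitivity gives $\mathcal{E}(J_{6,n})\le\mathcal{E}(W_n)$. Unlike Corollary~\ref{energycomp}, no divisibility by $4$ is needed, because we bypass $J_{4,n}$ entirely.

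If a self-contained verification is preferred, I would instead compare closed forms directly. Theorem~\ref{teo:J6n} and Proposition~\ref{prop:energiaCicloRueda} give $\mathcal{E}(W_n)-\mathcal{E}(J_{6,n}) = 2\sqrt{n+1}+4-2\rho-2\theta$, with $\rho,\theta$ as in Theorem~\ref{teo:J6n}. So it suffices to show $\rho+\theta\le\sqrt{n+1}+2$. Since $\rho^2$ and $\theta^2$ are the roots of $y^2-(5+\tfrac n6)y+(\tfrac n2+4)$, we have
\[
(\rho+\theta)^2=\rho^2+\theta^2+2\rho\theta = \tfrac n6+5+2\sqrt{\tfrac n2+4}.
\]
The right-hand side squared is $n+5+4\sqrt{n+1}$. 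The inequality then reduces to $2\sqrt{\tfrac n2+4}\le \tfrac{5n}{6}+4\sqrt{n+1}$, which is clear because $\tfrac n2+4\le 4(n+1)$.

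The only mild obstacle is ensuring $\theta$ is real, i.e.\ that $\theta^2\ge 0$. This follows because the product of the roots, $\tfrac n2+4$, and their sum, $5+\tfrac n6$, are both positive. The transitivity argument, however, already avoids this issue, so I would present that as the proof.
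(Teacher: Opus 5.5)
Your transitivity argument is exactly how the paper obtains this corollary: it is stated immediately after $\mathcal{E}(J_{6,n})\le\mathcal{E}(J_{2,n})$ (for $6\mid n$) and follows from it together with $\mathcal{E}(J_{2,n})\le\mathcal{E}(W_n)$ (for even $n$), so no divisibility by $4$ is needed. Your alternative direct check via $(\rho+\theta)^2=\tfrac{n}{6}+5+2\sqrt{\tfrac{n}{2}+4}\le\bigl(\sqrt{n+1}+2\bigr)^2$ is also correct.
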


\begin{prop}
If $n$ is a positive integer divisible by $4$, then
\[
\mathcal{E}(J_{4,n})\le \mathcal{E}(W_n).
\]
\end{prop}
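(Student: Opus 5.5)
We compare the two closed formulas directly, as in the previous comparisons. By Proposition~\ref{prop:energiaCicloRueda} and Theorem~\ref{teo:J4n},
\[
\mathcal{E}(W_n)-\mathcal{E}(J_{4,n})
=
2\sqrt{n+1}-2+4-2\rho-2\theta
=
2\bigl(1+\sqrt{n+1}-\rho-\theta\bigr),
\]
where $\rho$ and $\theta$ are as in Theorem~\ref{teo:J4n}. So it suffices to prove $\rho+\theta\le 1+\sqrt{n+1}$.

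Both sides are nonnegative, so we may square. From the proof of Theorem~\ref{teo:J4n}, $\rho^2$ and $\theta^2$ are the two roots of
\[
y^2-\bigl(4+\tfrac n4\bigr)y+\tfrac n2=0.
\]
Hence $\rho^2+\theta^2=4+\tfrac n4$ and $\rho\theta=\sqrt{n/2}$, which gives
\[
(\rho+\theta)^2=4+\tfrac n4+2\sqrt{\tfrac n2}=4+\tfrac n4+\sqrt{2n}.
\]
The right-hand side squares to $(1+\sqrt{n+1})^2=n+2+2\sqrt{n+1}$. The inequality therefore becomes
\[
4+\tfrac n4+\sqrt{2n}\le n+2+2\sqrt{n+1},
\quad\text{i.e.}\quad
2+\sqrt{2n}\le \tfrac{3n}{4}+2\sqrt{n+1}.
\]

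Now we bound term by term. Since $2n\le 4(n+1)$, we have $\sqrt{2n}\le 2\sqrt{n+1}$. Since $n\ge 4$ (as $4\mid n$ and $n\ge 3$), we have $2\le 3n/4$. Adding these two bounds gives the inequality for every admissible $n$, so $\mathcal{E}(J_{4,n})\le\mathcal{E}(W_n)$.

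The only delicate point is rewriting $(\rho+\theta)^2$ through Vieta's formulas rather than manipulating the nested radicals directly. Once that is done the rest is elementary, and no lower bound on $n$ is needed beyond $n\ge4$, unlike the comparison between $J_{4,n}$ and $J_{2,n}$. This also means the statement cannot be obtained by transitivity through $J_{2,n}$, since that route would require $n\ge 32$.
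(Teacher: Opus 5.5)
Your proof is correct and follows the paper's own argument: reduce to $\rho+\theta\le 1+\sqrt{n+1}$, square, and verify the resulting inequality (the paper's $8+4\sqrt{2n}\le 3n+8\sqrt{n+1}$ is four times yours) for $n\ge 4$. Your Vieta computation of $(\rho+\theta)^2$ and the term-by-term bound make explicit the steps the paper leaves to the reader.
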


\begin{proof}
By Theorem~\ref{teo:J4n} and Proposition~\ref{prop:energiaCicloRueda},
\[
\mathcal{E}(W_n)
=
\mathcal{E}(J_{4,n})-2\rho-2\theta+2(\sqrt{n+1}+1),
\]
where $\rho,\theta$ are as in Theorem~\ref{teo:J4n}.  
Thus we require
\[
\rho+\theta\le \sqrt{n+1}+1.
\]

Squaring yields
\[8+4\sqrt{2n}\le 3n + 8 \sqrt{n+1},\]
which holds for all $n\ge 4$.
%\[0\le \frac{3n}{4}-2+2\left(\sqrt{n+1}-\sqrt{\tfrac{n}{2}}\right),\]

%\textcolor{blue}{creo que acá no hay que darle bola al claude}
\end{proof}

\section{Conclusions}

Corollary~\ref{energycomp} shows a monotone chain of energies for $J_{2,n}$, $J_{4,n}$, and $J_{6,n}$ when $n$ satisfies suitable divisibility conditions. We have shown that, when edges are removed from the wheel $W_n$ following the structured pattern that produces the Jahangir graphs, the energy decreases in a predictable way. Moreover, within this restricted class of deletions, the more spokes removed, the lower the energy of the resulting graph.
  
We finalize this paper suggesting some new problems.

 \textbf{Problem 1} The result exposed in Corollary~\ref{energycomp} suggests that the number of cycles in the graph plays a central role in determining its energy. However, Remark~\ref{obs:falla} shows that this behavior fails for small values of $n$, and understanding the precise mechanism behind this anomaly remains an interesting problem.

 \textbf{Problem 2} A natural question is whether such monotonicity persists beyond this highly symmetric deletion pattern.  
    Does every deletion of $m$ evenly spaced spokes decrease the energy more than the deletion of $m-1$ such spokes?
 This motivates the broader conjectural inequality:
    \[
    \mathcal{E}(C_n)\le \mathcal{E}(J_{6,n})\le \mathcal{E}(J_{5,n})\le
    \mathcal{E}(J_{4,n})\le \mathcal{E}(J_{3,n})\le
    \mathcal{E}(J_{2,n})\le \mathcal{E}(W_n),
    \]
    valid whenever the graphs $J_{m,n}$ are defined.  
    Establishing or refuting this full ordering remains an open problem.
    
    \textbf{Problem 3} We wonder if Proposition \ref{prop:remaining-eigenvalues} remains valid if $m$ is odd, given that the first item is already proved.

\end{document}